\algrenewcommand\algorithmicrequire{\textbf{Input:}}
\algrenewcommand\algorithmicensure{\textbf{Output:}}
\newtheorem{theorem}{Theorem}[section]   % these set up some environments for theorems, definitions, etc.
\newtheorem{lemma}[theorem]{Lemma}
\theoremstyle{definition}
\newtheorem{definition}[theorem]{Definition}
\theoremstyle{remark}
\theoremstyle{definition}
\newcommand{\rank}{\mbox{rank}}
\title{Low-Rank Reduced Biquaternion Tensor Ring Decomposition and Tensor Completion}
 \author{Hui Luo${}^1$,\ Xin Liu${}^2$\thanks{\em Corresponding author: xiliu@must.edu.mo (Xin Liu)}, \ Wei Liu${}^3$ and Yang Zhang${}^4$ \\[4mm]
 {\small ${}^1$ School of Computer Science and Engineering,} {\small Faculty of Innovation Engineering},\\ {\small Macau University of Science and Technology, Macau, China}\\
 {\small ${}^2$Macau Institute of Systems Engineering}, {\small Faculty of Innovation Engineering},\\ {\small Macau University of Science and Technology, Macau, China}\\ 
 {\small ${}^3$School of Artificial Intelligence at Sun Yat-sen University and the} \\ {\small   Guangdong Key Laboratory of Big Data Analysis and Processing,}  {\small Guangzhou, China}\\
 {\small ${}^4$Department of Mathematics, University of Manitoba, MB, Canada} }
\date{}
\begin{document}
\maketitle

\begin{abstract}
We define the reduced biquaternion tensor ring (RBTR) decomposition and provide a detailed exposition of the corresponding algorithm RBTR-SVD. Leveraging RBTR decomposition, we propose a novel low-rank tensor completion algorithm RBTR-TV integrating RBTR ranks with total variation (TV) regularization to optimize the process. Numerical experiments on color image and video completion tasks indicate the advantages of our method.
 \\

\noindent {\it Keyword: }
Reduced biquaternion; Tensor ring decomposition; Low-rank tensor completion; Image completion; Video completion; Total variation.\\

%% PACS codes here, in the form: \PACS code \sep code

%% MSC codes here, in the form: \MSC code \sep code
%% or \MSC[2008] code \sep code (2000 is the default)
\noindent {AMS 2010 Subject Classification:} 14N07, 15A72, 41A50.

\end{abstract}

\section{Introduction}

In the past decades, tensor-based learning methods are extensively used in domains like computer vision, recommender systems and signal processing \cite{fanaee2016tensor,kolda2009tensor, shao2022tucker}.   Tensors' ability to seamlessly integrate multiple modes of information makes them particularly suited for the tasks like image reconstruction, image classification, object detection, signal reconstruction, and many other applications where traditional vector or matrix representations might fall short. 

During the course of collecting, storing, or transmitting data, some information may become damaged or missing.   Tensor completion arises in an effort to recover the true underlying data from these incomplete observations.  
The quaternion tensor   completion can effectively capture the relationships among color channels and has produced outstanding results in image and video completion \cite{bengua2017efficient,chen2023quaternion, he2023eigenvalues, jia2022non, pei2008eigenvalues}.  

Reduced biquaternions (RBs) $\mathbb{H}_c$ is one kind of quaternion algebras \cite{2020Least, schutte1990hypercomplex}.   RBs has some notable advantages in computational efficiency and algorithmic simplicity due to its commutativity of the multiplication.   For instance,   the  singular value decompositions (SVDs) of  RB matrices are significantly more computationally efficient than ones of Hamilton quaternion matrices, requiring only a quarter of the computational efforts \cite{pei2008eigenvalues}. In \cite{el2017color}, RBs is applied to effectively represent color images  within the domain of face recognition, significantly improving the performance over traditional PCA methods. In \cite{gai2021reduced}, a neural network based on RBs is proposed, exhibiting commendable performance in the implementation of  image denoising and image classification. 

Consequently, due to its simplicity in theoretical analyses and algorithmic implementations, alongside its extensive applications in image and video processing tasks, we choose RBs for image and video representations to solve tensor completion problems in this paper. 

One of the strategies for tensor completion is to minimize the rank of the tensor, thereby updating it to a lower-rank tensor that best represents the underlying structure \cite{huang2020robust}, which can be presented as follows: 
\begin{equation}
    \min_{\mathcal{X}}  \ \text{rank}(\mathcal{X}) \ \ 
    \text{s.t.}  \quad P_{\Omega}(\mathcal{X}) = P_{\Omega}(\mathcal{T}), 
    \label{minrank}
\end{equation}
where $\mathcal{X} \in \mathbb{H}_c^{I_1 \times I_2 \times \ldots \times I_N}$ is the recovered $N$th-order RB tensor, $\mathcal{T} \in \mathbb{H}_c^{I_1 \times I_2 \times \ldots \times I_N}$ is the observed $N$th-order RB tensor, and $P_{\Omega}(\cdot)$ represents the operator for projection on $\Omega$ which is the collection of indices of known entries. 

To tackle this issue, we first need to choose the rank of a tensor. Typically, a tensor rank can be characterized using a variety of tensor decomposition techniques, including CP decomposition, Tucker decomposition, tensor train and tensor ring decomposition \cite{zhao2016tensor}, etc. The tensor ring decomposition is a closed-loop structure decomposition, where each core tensor is connected in a ring, and its rank is defined by the dimensions of the core tensors in the ring.   Many researches have been done on tensor ring decompositions, demonstrating its superiority in certain application scenarios compared to other methods \cite{he2022hyperspectral, wu2023tensor}. 
\iffalse
{\color{blue}
While the current tensor ring decomposition methods have achieved certain effectiveness in handling incomplete and noisy data, real-number tensor ring decomposition often struggles to capture the intrinsic channel correlations in color images. This is particularly challenging when dealing with higher-order tensors, where the number of parameters and the computational complexity can significantly increase. Therefore, it needs to explore tensor ring decomposition theory based on reduced biquaternions, which  includes proving basic theorems of RBs, defining and solving RB tensor ring decomposition and completion problems, leveraging their capability in data representation to alleviate the challenges associated with large-scale data storage and processing. 
%Inspired by previous work, we will 
Our main contributions are as follows: 
\begin{enumerate}
\item
Introducing reduced biquaternion tensor ring decomposition: A novel tensor decomposition method using tensor ring structures and reduced biquaternions, lowering storage costs versus TR-SVD at comparable error levels. In image processing, it effectively preserves image quality with compressed data sizes. 
\item
Introducing reduced biquaternion tensor ring completion: A novel tensor completion method based on reduced biquaternion image representation. This approach integrates  RBTR rank with total variation regularization, effectively enhances color image completion, and offers  improved results in image processing tasks. 
\end{enumerate}
}
\fi

Current tensor ring decomposition methods based on real-number tensors may face limitations in capturing intrinsic channel correlations, especially in color images and videos. Leveraging the advantages of RBs, such as their ability to effectively represent interactions among multiple channels and reduce computational costs, reduced biquaternion-based tensor ring decomposition (RBTR) offers a promising solution to address these challenges. Therefore, it is essential to further establish the theoretical foundation of RBTR to enhance its applications in large-scale data storage and processing. Our main contributions are as follows: 
\begin{enumerate}
\item
We propose reduced biquaternion tensor ring decomposition, a novel tensor decomposition method employing tensor ring structures with reduced biquaternions. This method achieves lower storage costs compared to TR-SVD while maintaining comparable error levels. In image processing, it effectively preserves image quality with compressed data sizes. 
\item
We develop reduced biquaternion tensor ring completion, a novel tensor completion method based on reduced biquaternion image representation. By integrating RBTR rank with total variation regularization, this method achieves superior reconstruction quality in color image and video completion compared to existing methods.
\end{enumerate}

This paper is structured as follows: In Section 2, we review some definitions and properties of  reduced biquaternion matrices and tensors. In Section 3, we investigate  the reduced biquaternion tensor ring decomposition and propose an algorithm to solve this decomposition structure. Subsequently, in Section 4, we concentrate on the reduced biquaternion tensor completion problem, and  propose a novel method rooted in the RBTR decomposition. Experimental results validating the efficacy of our approach and comparing it with other existing methods are presented in Section 5. Finally, Section 6 concludes our findings and offers insights into potential future research directions.

\section{Preliminaries}

In this section, we recall some definitions and properties of reduced biquaternion matrices and tensors along with the proofs of related theorems. 
 
Reduced biquaternions (RBs) $  \mathbb{H}_c$ was introduced by Segre \cite{Segre1892}, which is an algebra over the real number field $\mathbb{R}$ with a basis $\{ 1, i, j, k \}$ and has the following form:
\begin{flalign}
 \mathbb{H}_c = \{ a + bi + cj + dk \ | \ i^2 = k^2 = -1, \ j^2 = 1, \    ij = ji = k, a, b,c, d \in \mathbb{R} \}. 
 \end{flalign}
Clearly, the imaginary units $i, j, k$ also satisfy the following  rules:
\[
  jk = kj = i, \quad ki = ik = -j. 
\]
Thus all elements in $\mathbb{H}_c$ commute, that is,  it is a commutative algebra. Comparing with Hamilton quaternions, beside the commutativity, another advantage of reduced biquaternions is having an orthogonal basis $\{e_1, e_2\}$ over the complex number field $\mathbb{C}$, which can be constructed as follows (see, e.g., \cite{pei2004commutative}): for any $ q \in \mathbb{H}_c$,
\begin{flalign}
    {q}= a+bi + cj+ dk = (a+ b i)+(c + d i)j = q_a + q_b j = q_{c1}e_1 + q_{c2}e_2, \notag
\end{flalign}
where $q_{c1} = q_a + q_b,q_{c2}=q_a - q_b$, $e_1$ and $e_2$ are defined as:
\[    
e_1= \frac{1+j}{2}, \ e_2= \frac{1-j}{2}.
\]
It is easy to see that
\[
    e_1e_2=0, \
    e_1^n = e_1^{n-1} = \dots = e_1^2=e_1, \ e_2^n = e_2^{n-1} = \dots = e_2^2=e_2. 
\]
Moreover, the conjugate  of $q$ is $\overline{{q}}=a-bi+cj-dk=\overline{{q_{c1}}}e_1+\overline{{q_{c2}}}e_2$ and its modulus is $|{{q}}|=\sqrt{a^2+b^2+c^2+d^2}$.  
 
Throughout  of this paper,  we will use a lowercase letter \(a\) for scalars, a bold lowercase \( \mathbf{a} \) for vectors, a bold uppercase \( \mathbf{A} \) for matrices, and calligraphic uppercase \( \mathcal{A} \) for tensors.  $\Re(\cdot)$ denotes the real part of a reduced biquaternion. 

For a given reduced biquaternion vector $\mathbf{q} = (q_{i}) \in \mathbb{H}_c^{n \times 1}$, the 2-norm is defined as $\|\mathbf{q}\|_2 = \sqrt{\sum_{i} |q_i|^2}$. For a given reduced biquaternion matrix $\mathbf{Q} = (q_{i,j}) \in \mathbb{H}_c^{M\times N}$, its Frobenius norm is defined as: $\lVert {\mathbf{Q}} \rVert_{F} = \sqrt{\sum_{i,j} |{{q}}_{i,j}|^2}$ and its nuclear norm is defined as $\lVert {\mathbf{Q}} \rVert_{*} = \sum_{i} |\sigma_{i}({\mathbf{Q}})|$, where $\sigma_{i}({\mathbf{Q}})$ is the $i$-th singular value of ${\mathbf{Q}}$.  The conjugate transpose $\mathbf{Q}^H$ of $\mathbf{Q}$ is defined as $\mathbf{Q}^H = (a_{i, j})^H = (\overline{a_{j, i}}) \in \mathbb{H}_c^{N\times M}$. The trace of a square RB matrix $\mathbf{A}$ is defined in a usual way,  denoted by $\mathrm{Tr}(\mathbf{A})$.

On the other hand, we can use the orthogonal basis $\{e_1, e_2\}$ to represent a reduced biquaternion matrix ${\mathbf{Q}} = \mathbf{A} + \mathbf{B} i + \mathbf{C} j + \mathbf{D} k \in \mathbb{H}_c^{M \times N}$ with  $\mathbf{A}, \mathbf{B}, \mathbf{C}, \mathbf{D} \in \mathbb{R}^{M \times N}$  as follows: 
\begin{flalign}
    {\mathbf{Q}} =(\mathbf{A} + \mathbf{B} i)+(\mathbf{C} + \mathbf{D} i)j = \mathbf{Q}_a + \mathbf{Q}_b j  
    = \mathbf{Q}_{c1} e_1 + \mathbf{Q}_{c2} e_2, 
\label{RBMR}
\end{flalign}
where $\mathbf{Q}_{c1}, \mathbf{Q}_{c2} \in \mathbb{C}^{M \times N} $ and $\mathbf{Q}_{c1} = \mathbf{Q}_a + \mathbf{Q}_b,\mathbf{Q}_{c2} = \mathbf{Q}_a - \mathbf{Q}_b$. By the definition of the conjugate transpose $\mathbf{Q}^H$, we have ${\mathbf{Q}}^H= \mathbf{Q}_{c1}^H e_1 + \mathbf{Q}_{c2}^H e_2.$
Moreover, a complex representation of  ${\mathbf{Q}}$ is given by
\begin{equation}
  \begin{bmatrix}
\mathbf{Q}_{c1} & 0 \\
0 & \mathbf{Q}_{c2}
\end{bmatrix} \in \mathbb{C}^{2M \times 2N}.
\label{equivalent complex representation}
\end{equation}
A real representation of $\mathbf{Q}=\mathbf{A}+\mathbf{B} i+\mathbf{C} j+\mathbf{D} k \in \mathbb{H}_c^{M \times N}, \ \mathbf{A}, \mathbf{B}, \mathbf{C}, \mathbf{D} \in \mathbb{R}^{M \times N}$ is given by 
\[
\mathbf{Q}^{R}=\left[\begin{array}{rrrr}
\mathbf{A} & -\mathbf{B} & \mathbf{C} & -\mathbf{D} \\ 
\mathbf{B} & \mathbf{A} & \mathbf{D} & \mathbf{C} 
\\ \mathbf{C} & -\mathbf{D} & \mathbf{A} & -\mathbf{B} 
\\ \mathbf{D} & \mathbf{C} & \mathbf{B} & \mathbf{A}
\end{array}\right].
\]
%(See, for example, \cite{pei2004commutative}).
\begin{lemma}
(Reduced biquaternion singular value decomposition RBSVD)\cite{pei2008eigenvalues} \\
With the notations in the formula \eqref{RBMR}, if the SVDs of  $\mathbf{Q}_{c1}$ and $\mathbf{Q}_{c2}$ are in the following forms: 
\[ 
\mathbf{Q}_{c1} = \mathbf{U}_1 \boldsymbol{\Sigma}_1 \mathbf{V}_1^\mathrm{H}, \quad 
 \mathbf{Q}_{c2} = \mathbf{U}_2 \boldsymbol{\Sigma}_2 \mathbf{V}_2^\mathrm{H}, 
\]
then the SVD of ${\mathbf{Q}} $ is
\begin{flalign}
	{\mathbf{Q}} = {{\mathbf{U}}} \boldsymbol{\Sigma} {{\mathbf{V}}}^\mathrm{H},
\end{flalign}
  where ${\mathbf{U}} =  \mathbf{U}_1 e_1 +  \mathbf{U}_2 e_2 $, $\boldsymbol{\Sigma} =  \boldsymbol{\Sigma}_1 e_1 +  \boldsymbol{\Sigma}_2 e_2 $, ${\mathbf{V}} =  \mathbf{V}_1 e_1 +  \mathbf{V}_2 e_2$. 
\end{lemma}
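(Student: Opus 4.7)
The plan is to exploit the orthogonal idempotent decomposition of $\mathbb{H}_c$ over $\mathbb{C}$ via $\{e_1,e_2\}$, together with the two complex SVDs supplied by the hypothesis. The key algebraic facts already established in the preamble are: $e_1^2=e_1$, $e_2^2=e_2$, $e_1 e_2=e_2 e_1=0$, and the componentwise behavior of the conjugate transpose, namely $\mathbf{Q}^H=\mathbf{Q}_{c1}^H e_1+\mathbf{Q}_{c2}^H e_2$. Because multiplication in $\mathbb{H}_c$ is commutative and the basis $\{e_1,e_2\}$ behaves like a pair of orthogonal projectors, any product of matrices written in the form $\mathbf{X}_1 e_1+\mathbf{X}_2 e_2$ and $\mathbf{Y}_1 e_1+\mathbf{Y}_2 e_2$ collapses, upon multiplying out the four cross terms, to $\mathbf{X}_1\mathbf{Y}_1 e_1+\mathbf{X}_2\mathbf{Y}_2 e_2$; this is the single observation that powers the entire argument.

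First I would verify the claimed identity $\mathbf{Q}=\mathbf{U}\boldsymbol{\Sigma}\mathbf{V}^H$ by direct substitution. Writing
\[
\mathbf{U}\boldsymbol{\Sigma}\mathbf{V}^H
=(\mathbf{U}_1 e_1+\mathbf{U}_2 e_2)(\boldsymbol{\Sigma}_1 e_1+\boldsymbol{\Sigma}_2 e_2)(\mathbf{V}_1^H e_1+\mathbf{V}_2^H e_2),
\]
the idempotent/orthogonality relations force every mixed $e_1 e_2$ or $e_2 e_1$ term to vanish, leaving $\mathbf{U}_1\boldsymbol{\Sigma}_1\mathbf{V}_1^H e_1+\mathbf{U}_2\boldsymbol{\Sigma}_2\mathbf{V}_2^H e_2$, which equals $\mathbf{Q}_{c1}e_1+\mathbf{Q}_{c2}e_2=\mathbf{Q}$ by the two complex SVDs. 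This step is essentially bookkeeping once one trusts the componentwise multiplication rule.

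Next I would check that $\mathbf{U}$ and $\mathbf{V}$ are unitary in the RB sense, i.e.\ $\mathbf{U}^H\mathbf{U}=\mathbf{I}$ and $\mathbf{V}^H\mathbf{V}=\mathbf{I}$. Using the conjugate transpose rule, $\mathbf{U}^H\mathbf{U}=(\mathbf{U}_1^H e_1+\mathbf{U}_2^H e_2)(\mathbf{U}_1 e_1+\mathbf{U}_2 e_2)=\mathbf{U}_1^H\mathbf{U}_1 e_1+\mathbf{U}_2^H\mathbf{U}_2 e_2=\mathbf{I}\,e_1+\mathbf{I}\,e_2=\mathbf{I}$, since $e_1+e_2=1$ in $\mathbb{H}_c$; the same computation works for $\mathbf{V}$. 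Finally, since $\boldsymbol{\Sigma}_1$ and $\boldsymbol{\Sigma}_2$ are real nonnegative diagonal matrices, $\boldsymbol{\Sigma}=\boldsymbol{\Sigma}_1 e_1+\boldsymbol{\Sigma}_2 e_2$ is diagonal in $\mathbb{H}_c$ with RB singular values $\sigma_i(\mathbf{Q})=\sigma_i(\mathbf{Q}_{c1})e_1+\sigma_i(\mathbf{Q}_{c2})e_2$, completing the SVD characterization.

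The only genuine subtlety, and the step I would be most careful with, is confirming that the conjugate transpose of an RB matrix really does split as $\mathbf{Q}^H=\mathbf{Q}_{c1}^H e_1+\mathbf{Q}_{c2}^H e_2$ (already asserted in the preamble): this requires combining the scalar conjugation rule $\overline{q_{c1}e_1+q_{c2}e_2}=\overline{q_{c1}}e_1+\overline{q_{c2}}e_2$ with matrix transposition and verifying that the complex coefficients $\mathbf{Q}_a\pm\mathbf{Q}_b$ behave correctly under Hermitian conjugation. Once this is in hand, the rest of the argument is routine computation with the idempotents $e_1,e_2$.
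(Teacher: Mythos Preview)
Your proposal is correct, and the idempotent decomposition via $e_1,e_2$ is precisely the natural route. However, note that the paper does not actually supply a proof of this lemma: it is stated as a cited result from \cite{pei2008eigenvalues} and used without further justification. Your argument---verifying the factorization componentwise, then checking unitarity of $\mathbf{U},\mathbf{V}$ from $e_1+e_2=1$, and observing that $\boldsymbol{\Sigma}$ is diagonal---is exactly the standard proof one would expect and matches the approach in the cited source; there is nothing to compare against within the present paper itself.
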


There are several different definitions of the ranks of matrices over commutative rings. For our purpose, using above RBSVD, we  define   {\it the rank of a RB matrix} $\mathbf{A}$ to be the number of its non-zero singular values, that is, $\rank (\mathbf{A})= \rank(\mathbf{\Sigma}).$   Next, we will derive some useful results about the rank of RB matrix $\mathbf{A}$. 

\begin{theorem}\label{4rank}
 Assume that $q=a+bi+cj+dk\neq 0 \in \mathbb{H}_c$. Then $\rank (q^R)=4.$  
\end{theorem}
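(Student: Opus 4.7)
The plan is to reduce the rank computation to the determinant of the $4\times 4$ real matrix $q^R$. First I would observe the $2\times 2$ block structure
\[
q^R = \begin{bmatrix} A & B \\ B & A \end{bmatrix}, \qquad A = \begin{bmatrix} a & -b \\ b & a \end{bmatrix}, \quad B = \begin{bmatrix} c & -d \\ d & c \end{bmatrix}.
\]
Each of $A$, $B$ is the standard $2\times 2$ real realisation of a complex number (of $a+bi$ and $c+di$ respectively), so both lie in the commutative subalgebra of $\mathbb{R}^{2\times 2}$ generated by $I$ and $\begin{bmatrix} 0 & -1 \\ 1 & 0 \end{bmatrix}$; in particular $AB=BA$.

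Next, using the standard block-determinant identity for commuting blocks
\[
\det\begin{bmatrix} A & B \\ B & A \end{bmatrix} = \det(A-B)\,\det(A+B),
\]
I would compute the two factors directly. Both $A\pm B$ are again of the complex-number form, so their determinants are sums of two real squares:
\[
\det(A+B) = (a+c)^2+(b+d)^2, \qquad \det(A-B) = (a-c)^2+(b-d)^2.
\]
Recognising these as $|q_{c1}|^2$ and $|q_{c2}|^2$ under the decomposition $q = q_{c1}e_1+q_{c2}e_2$ introduced in Section~2, I obtain
\[
\det(q^R) = |q_{c1}|^2\,|q_{c2}|^2.
\]

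The final step is to deduce $\det(q^R)\neq 0$, and this is the delicate point of the proof. The hypothesis $q\neq 0$ is equivalent to $(q_{c1},q_{c2})\neq (0,0)$, which is strictly weaker than the nonvanishing of the product $|q_{c1}|^2|q_{c2}|^2$ (for instance $q=e_1$ is a nonzero reduced biquaternion with $q_{c2}=0$, and one can check directly that $\rank(e_1^R)=2$). Thus the hard part is really the hypothesis reading: I would either interpret "$q\neq 0$" as the invertibility condition that both $q_{c1},q_{c2}$ are nonzero, in which case the determinant calculation above gives $\det(q^R)>0$ and hence $\rank(q^R)=4$ in one line, or, equivalently, appeal to the equivalent complex representation in \eqref{equivalent complex representation}: since $q$ corresponds to $\mathrm{diag}(q_{c1},q_{c2})$ and passage from a complex matrix to its real realisation doubles the rank, we get $\rank(q^R)=2\,\rank(\mathrm{diag}(q_{c1},q_{c2}))$, which equals $4$ exactly when both components are nonzero. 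Either route finishes the proof without further computation.
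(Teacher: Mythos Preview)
Your determinant computation via the commuting $2\times 2$ blocks is clean and correct, and your counterexample $q=e_1$ is legitimate: with $a=c=\tfrac12$, $b=d=0$ the first and third columns of $q^R$ coincide, so $\rank(e_1^R)=2$. The theorem as literally stated is therefore false, and your proposed reading of the hypothesis as ``$q_{c1}\neq 0$ and $q_{c2}\neq 0$'' (equivalently, $q$ invertible in $\mathbb{H}_c$) is precisely what is required for the conclusion to hold.

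The paper argues along a different line: it shows in detail that columns $1$ and $2$ of $q^R$ are linearly independent whenever $q\neq 0$ (that step is fine), asserts that the same holds ``similarly'' for every pair of columns, and concludes $\rank(q^R)=4$. Your example already defeats the ``similarly'': columns $1$ and $3$, namely $(a,b,c,d)^T$ and $(c,d,a,b)^T$, are proportional exactly when $a=\pm c$ and $b=\pm d$, i.e.\ when one of $q_{c1},q_{c2}$ vanishes. (And pairwise linear independence of columns would not, by itself, force rank $4$ in any case.) Your determinant route is both simpler and sharper: the factorisation $\det(q^R)=|q_{c1}|^2\,|q_{c2}|^2$ makes the correct hypothesis transparent and finishes the argument in one line once that hypothesis is in place.
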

\begin{proof}
For $q\in \mathbb{H}_c$, its real representation is given by 
\[
q^{R}=\left[\begin{array}{rrrr}
a & -b & c & -d \\ 
b & a & d & c 
\\ c& -d & a & -b 
\\ d & c & b & a
\end{array}\right].
\]
Now we show that each two columns of $q^R$ are  linearly independently. Support that there exists a nonzero real number $k$ such that 
\[
\left[\begin{array}{r}
a  \\ 
b\\
c  \\
d
\end{array}\right]=k\left[\begin{array}{r}
-b  \\ 
a\\
-d  \\
c
\end{array}\right],
\]
that is, 
\begin{equation}\label{a1234}
a=-kb,\ b=ka,\ c=-kd,\ d=kc.
\end{equation} 
Thus we have 
$(1+k^2)a=0,\ (1+k^2)c=0$, which imply
 $a=0,\ c=0.$
According to the relations in \eqref{a1234},  we obtain $b=0, d=0,$ and then $q=0$, a contradiction to the assumption $q=a+bi+cj+dk\neq 0$. Hence, we proved the first and second column of $q^R$ are linearly independently. Similarly, we can show that any two columns of $q^R$ are also linearly independently. Therefore, if $q=a+bi+cj+dk\neq 0$,  then $\rank(q^R)=4$. 
 \end{proof}

Some of the following  properties of the real representations are well-known (see, e.g., \cite{li2021}). 
\begin{lemma}
\label{pro22}
Let $\mathbf{A}, \mathbf{B} \in \mathbb{H}_c^{M \times N}, \ \mathbf{C} \in \mathbb{H}_c^{N \times S}, \ a \in \mathbb{R}$. Then:
\begin{itemize}
\item[(a)] $(\mathbf{A}+\mathbf{B})^{R}=\mathbf{A}^{R}+\mathbf{B}^{R},\ (a \mathbf{A})^{R}=a \mathbf{A}^{R};$
\item[(b)] $(\mathbf{A} \mathbf{C})^{R}=\mathbf{A}^{R} \mathbf{C}^{R}$;
\item[(c)] $(\mathbf{A}^H )^{R}={(\mathbf{A}^{R})}^{T}$;
\item[(d)]$\mathbf{U}$ is unitary if and only if $\mathbf{U}^R$ is orthogonal;
\item[(e)] $\rank(\mathbf{A}^R)=4\cdot \rank(\mathbf{A})$;
\item[(f)]$\rank(\mathbf{AB}) \leq min\{\rank(\mathbf{A}), \rank(\mathbf{B})\}.$
\end{itemize}
\end{lemma}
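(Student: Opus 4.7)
The plan is to handle (a)--(c) by direct block computation from the definition of the real representation, derive (d) as a formal consequence of (b) and (c), reduce (e) via Lemma~2.1 (RBSVD) to a rank count for a ``diagonal'' RB matrix that is then finished off by Theorem~\ref{4rank}, and deduce (f) in one line from (b) and (e).

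Part (a) is linearity of the block assignment $\mathbf{Q}\mapsto\mathbf{Q}^R$ and is immediate from the definition. For (c) I would expand $\mathbf{Q}^H$ as $\mathbf{A}^T-\mathbf{B}^T i+\mathbf{C}^T j-\mathbf{D}^T k$ and observe that its $4\times 4$ block pattern coincides with the transpose of $\mathbf{Q}^R$. Part (b) is the algebraic heart: it says that $\mathbf{Q}\mapsto\mathbf{Q}^R$ is a homomorphism and encodes the entire RB multiplication table in a single matrix identity. I would verify it first at the scalar level by multiplying $q=a+bi+cj+dk$ and $p=e+fi+gj+hk$ using the rules $i^2=k^2=-1$, $j^2=1$, $ij=ji=k$, $jk=kj=i$, $ki=ik=-j$, and matching the four real components of $qp$ against the corresponding entries of $q^Rp^R$. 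The matrix case is a block lift of this scalar identity: the products that appear in $\mathbf{A}^R\mathbf{C}^R$ are products of distinct real blocks in a fixed left--right order, so no commutativity of block entries is ever invoked and the scalar identity carries over without change.

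Item (d) is then formal: by (b) and (c), $(\mathbf{U}^R)^T\mathbf{U}^R=(\mathbf{U}^H)^R\mathbf{U}^R=(\mathbf{U}^H\mathbf{U})^R$; since $\mathbf{I}^R=\mathbf{I}$ and the map $\mathbf{Q}\mapsto\mathbf{Q}^R$ is injective (it simply reshuffles the four real blocks into a fixed pattern), unitarity of $\mathbf{U}$ is equivalent to orthogonality of $\mathbf{U}^R$. The main obstacle is (e). Here I would apply Lemma~2.1 to factor $\mathbf{A}=\mathbf{U}\boldsymbol{\Sigma}\mathbf{V}^H$ with $\mathbf{U},\mathbf{V}$ unitary RB matrices and $\boldsymbol{\Sigma}$ ``diagonal'' over $\mathbb{H}_c$ with exactly $r=\rank(\mathbf{A})$ nonzero diagonal entries. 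Applying (b)--(d) gives $\mathbf{A}^R=\mathbf{U}^R\boldsymbol{\Sigma}^R(\mathbf{V}^R)^T$ with orthogonal outer factors, so $\rank(\mathbf{A}^R)=\rank(\boldsymbol{\Sigma}^R)$. The delicate step is evaluating this last rank: a simultaneous permutation that interleaves the four block rows and four block columns of $\boldsymbol{\Sigma}^R$ recasts it as a block-diagonal matrix whose diagonal blocks are precisely the $4\times 4$ real representations of the individual scalar diagonal entries of $\boldsymbol{\Sigma}$ (bordered by zero rows or columns when $M\neq N$). By Theorem~\ref{4rank}, each nonzero entry contributes a rank-$4$ block and each zero entry contributes $0$, yielding $\rank(\boldsymbol{\Sigma}^R)=4r$.

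Finally, (f) is a one-liner: by (b) and (e), $4\,\rank(\mathbf{AB})=\rank(\mathbf{A}^R\mathbf{B}^R)\le\min\{\rank(\mathbf{A}^R),\rank(\mathbf{B}^R)\}=4\min\{\rank(\mathbf{A}),\rank(\mathbf{B})\}$, using the standard rank inequality for products of real matrices, and dividing by $4$ completes the argument.
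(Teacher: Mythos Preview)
Your proposal is correct and follows essentially the same route as the paper: the paper cites (a)--(c) from the literature rather than expanding them, derives (d) from (b)--(c), reduces (e) via the RBSVD to $\rank(\boldsymbol{\Sigma}^R)$ and then permutes $\boldsymbol{\Sigma}^R$ into block-diagonal form with scalar blocks $\sigma_i^R$ to invoke Theorem~\ref{4rank}, and obtains (f) from (b), (e), and the real rank inequality exactly as you do. Your treatment is in fact slightly more careful in two places: you address the ``only if'' direction of (d) via injectivity of $\mathbf{Q}\mapsto\mathbf{Q}^R$ (the paper's proof only writes out one direction), and you note the zero-padding needed when $M\neq N$ (the paper illustrates (e) only on a square $2\times 2$ example).
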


\begin{proof}
(a)-(c) are known in \cite{li2021}. We only need to prove (d)-(f).
To show (d), applying the real representation operator on the $\mathbf{U}\mathbf{U}^H=\mathbf{I}_n$ gives $\mathbf{U}^R(\mathbf{U}^H)^R={\mathbf{I}_n}^R$. Using (c), we get  $\mathbf{U}^R(\mathbf{U}^R)^T=\mathbf{I}_{4n}.$  

To prove (e),  let the SVD of $\mathbf{A}=\mathbf{U\Sigma}\mathbf{ V}^H$. By (b), (c) and (d), we have 
\[
\mathbf{A}^R=\mathbf{U}^R\mathbf{\Sigma}^R (\mathbf{V}^H)^R=\mathbf{U}^R\mathbf{\Sigma}^R (\mathbf{V}^R)^T.
\]
Again, by (d), $\mathbf{U}^R$ and $ (\mathbf{V}^R)^T$ are orthogonal matrices, we obtain
\[
\rank(\mathbf{A}^R) = \rank(\mathbf{\Sigma}^R).
\]
Next, we prove $\rank(\mathbf{\Sigma}^R)=4\cdot \rank(\mathbf{\Sigma})$.
Without loss of generalization and for simplicity, we assume that $\mathbf{\Sigma}$ is a $2 \times 2$ matrix. According to \cite{pei2008eigenvalues}, $\mathbf{\Sigma}$ should be in the form of 
\[
\mathbf{\Sigma}=\left[\begin{array}{rr}
\sigma_1 & 0 \\
0 & \sigma_2
\end{array}\right]=\left[\begin{array}{rr}
a_{1} & 0 \\
0 & a_{2}
\end{array}\right]+\left[\begin{array}{rr}
c_{1} & 0 \\
0 & c_{2}
\end{array}\right]j, \ a_{1}, a_{2}, c_{1}, c_{2} \in \mathbb{R}. 
\] 
Then 
\[
\mathbf{\Sigma}^R=\left[\begin{array}{rrrrrrrr}
a_{1} & 0 & 0 & 0& c_{1} & 0 & 0 & 0\\ 
0 & a_{2} & 0 & 0& 0 & c_{2} & 0 & 0\\ 
0 & 0 & a_{1} & 0& 0 & 0 & c_{1} & 0\\
0 & 0 & 0 & a_{2}& 0 & 0 & 0& c_{2}\\ 
c_{1} & 0 &0 & 0 & a_{1} & 0 & 0 & 0\\ 
0 & c_{2} & 0 & 0& 0 & a_{2} & 0 & 0\\
0 & 0 &c_{1}& 0&  0&0 & a_{1} & 0\\ 
0 & 0 & 0 & c_{2}& 0 & 0 & 0 & a_{2}\\ 
\end{array}\right].
\]
Upon some arrangements of rows and columns
\[
\mathbf{\Sigma}^R  \rightarrow\left[\begin{array}{rrrrrrrr}
a_{1}  & 0 &c_{1}  & 0&0&0&0&0 \\ 
0  & a_{1} & 0  & c_{1}&0&0&0&0 \\
c_{1}  &0  & a_{1} & 0&0&0&0&0 \\ 
0 & c_{1}& 0 & a_{1}&0&0&0&0\\ 
0&0&0&0&a_{2}  & 0 & c_{2}  & 0\\ 
0&0&0&0& 0  & a_{2} & 0 & c_{2}\\
0&0&0&0&c_{2} &0 & a_{2} & 0 \\ 
0&0&0&0&0  &c_{2}&  0 & a_{2}\\ 
\end{array}\right]=\left[\begin{array}{rr}
\mathbf{\sigma}_1^R & 0 \\
0 & \mathbf{\sigma}_2^R
\end{array}\right],
\]
where $\mathbf{\sigma}_1=a_{1}+c_{1}j, \mathbf{\sigma}_2=a_{2}+c_{2}j.$
Obviously, if $\mathbf{\sigma}_1\neq 0, \mathbf{\sigma}_2\neq 0,$ then by Theorem \ref{4rank}, we get $\rank(\mathbf{\Sigma}^R) = 8,$ i.e., $\rank(\mathbf{\Sigma}^R)= 4 \cdot \rank(\mathbf{\Sigma}).$ Therefore, by the definition of rank of $\mathbf{A}$,   
\[
\rank(\mathbf{A}) = \rank(\mathbf{\Sigma}) =\frac{1}{4}\cdot 
 \rank(\mathbf{\Sigma}^R) = \frac{1}{4}\cdot \rank(\mathbf{A}^R).\]

Basing on (e), we can show (f) as follows. 

By (a) and (e), 
\[
\rank(\mathbf{AB})=\frac{1}{4}\cdot \rank((\mathbf{AB})^R) = \frac{1}{4} \cdot \rank(\mathbf{A}^R\mathbf{B}^R).
\]
Moreover, for the real matrices $\mathbf{A}^R, \ \mathbf{B}^R$,
\[
\rank(\mathbf{A}^R\mathbf{B}^R) \leq \min \{\rank(\mathbf{A}^R), \ \rank(\mathbf{B}^R)\}= 4 \cdot \min \{\rank(\mathbf{A}), \ \rank(\mathbf{B})\}.\]
Therefore
\[
\rank(\mathbf{AB}) \leq \min\{\rank(\mathbf{A}), \ \rank(\mathbf{B})\}.\]
\end{proof}

Next, we verify that the norm for RB vectors has the similar properties as the norm for real number vectors. We will use these properties in next sections.
\begin{theorem}
\label{lemma2.4}
Let $\mathbf{x}, \mathbf{y} \in \mathbb{H}_c^{n\times 1}$, where $ \mathbf{x} = \mathbf{x}_0 + \mathbf{x}_1i + \mathbf{x}_2j + \mathbf{x}_3k $ and $ \mathbf{y} = \mathbf{y}_0 + \mathbf{y}_1i + \mathbf{y}_2j + \mathbf{y}_3k $ with $ \mathbf{x}_l, \ \mathbf{y}_l\in \mathbb{R}^{n\times 1} (l=1,2,3,4)$. Then
\begin{itemize}
\item[(a)] $\|\mathbf{x} - \mathbf{y}\|_2^2 = \|\mathbf{x}\|_2^2 + \|\mathbf{y}\|_2^2 - 2 \Re(\mathbf{x}^H\mathbf{y})$;
\item[(b)] $\Re(\mathbf{x}^H\mathbf{y}) \leq \|\mathbf{x}\|_2 \|\mathbf{y}\|_2$.
\end{itemize}
\end{theorem}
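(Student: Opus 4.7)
The plan is to reduce everything to computations on the four real components of each RB scalar. For part (a), I would first expand $\|\mathbf{x} - \mathbf{y}\|_2^2$ entry-by-entry. Writing $x_i - y_i = (x_{i,0} - y_{i,0}) + (x_{i,1} - y_{i,1})i + (x_{i,2} - y_{i,2})j + (x_{i,3} - y_{i,3})k$, the definition of the modulus gives $|x_i - y_i|^2 = \sum_{l=0}^{3}(x_{i,l} - y_{i,l})^2 = |x_i|^2 + |y_i|^2 - 2 \sum_{l=0}^{3} x_{i,l} y_{i,l}$. Summing over $i$ yields $\|\mathbf{x} - \mathbf{y}\|_2^2 = \|\mathbf{x}\|_2^2 + \|\mathbf{y}\|_2^2 - 2 \sum_{i,l} x_{i,l} y_{i,l}$, so the identity will be established once I show that $\Re(\mathbf{x}^H \mathbf{y}) = \sum_{i,l} x_{i,l} y_{i,l}$.

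Next I would compute $\bar{x}_i y_i$ directly using the RB rules $i^2 = k^2 = -1$, $j^2 = 1$, $ij = ji = k$, $jk = kj = i$, $ki = ik = -j$. With $\bar{x}_i = x_{i,0} - x_{i,1} i + x_{i,2} j - x_{i,3} k$, gathering only the monomials that land in the real component gives the key identity $\Re(\bar{x}_i y_i) = x_{i,0} y_{i,0} + x_{i,1} y_{i,1} + x_{i,2} y_{i,2} + x_{i,3} y_{i,3}$. Summing over $i$ yields the required formula and completes part (a).

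For part (b), once this formula is in hand, I would view the component tuples $(x_{i,l})$ and $(y_{i,l})$ as vectors in $\mathbb{R}^{4n}$. Then $\Re(\mathbf{x}^H \mathbf{y})$ is exactly their Euclidean inner product while $\|\mathbf{x}\|_2$ and $\|\mathbf{y}\|_2$ coincide with their Euclidean norms, so the classical Cauchy--Schwarz inequality in $\mathbb{R}^{4n}$ yields the claim at once. Alternatively, one can deploy the standard quadratic trick: by part (a), for every $\lambda \in \mathbb{R}$, $0 \leq \|\lambda \mathbf{x} - \mathbf{y}\|_2^2 = \lambda^2 \|\mathbf{x}\|_2^2 - 2\lambda \Re(\mathbf{x}^H \mathbf{y}) + \|\mathbf{y}\|_2^2$, so the discriminant must be nonpositive, which gives $|\Re(\mathbf{x}^H \mathbf{y})| \leq \|\mathbf{x}\|_2 \|\mathbf{y}\|_2$ and hence the desired inequality.

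The main subtle point, more bookkeeping than a real obstacle, is that for an RB scalar $q$ one has $q\bar{q} \neq |q|^2$ in general (the product picks up a nonzero $j$-term because $j$ is not conjugated), and correspondingly $\mathbf{x}^H \mathbf{y} + \mathbf{y}^H \mathbf{x} \neq 2\Re(\mathbf{x}^H \mathbf{y})$. Thus one cannot mimic the complex or Hamilton-quaternion proof at the level of products; the argument must be organized around real parts throughout, which is precisely why the statement is phrased with $\Re(\cdot)$.
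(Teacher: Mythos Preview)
Your proposal is correct and follows essentially the same route as the paper: both expand $\|\mathbf{x}-\mathbf{y}\|_2^2$ into the four real component sums, verify by direct RB multiplication that $\Re(\mathbf{x}^H\mathbf{y})=\sum_l \mathbf{x}_l^T\mathbf{y}_l$, and then deduce part~(b) from the ordinary Cauchy--Schwarz inequality applied to the stacked vectors in $\mathbb{R}^{4n}$. Your extra discriminant argument and the remark on $q\bar q\neq|q|^2$ are nice additions but not needed for the paper's version.
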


\begin{proof}
For part (a), we have
\begin{align*}
    \|\mathbf{x} - \mathbf{y}\|_2^2 &= \|\mathbf{x}_0 - \mathbf{y}_0\|_2^2 + \|\mathbf{x}_1 - \mathbf{y}_1\|_2^2 + \|\mathbf{x}_2 - \mathbf{y}_2\|_2^2 + \|\mathbf{x}_3 - \mathbf{y}_3\|_2^2 \\
    &= \|\mathbf{x}_0\|_2^2 + \|\mathbf{y}_0\|_2^2 - 2\mathbf{x}_0^T\mathbf{y}_0 + \|\mathbf{x}_1\|_2^2 + \|\mathbf{y}_1\|_2^2 - 2\mathbf{x}_1^T\mathbf{y}_1 \\
    &\quad + \|\mathbf{x}_2\|_2^2 + \|\mathbf{y}_2\|_2^2 - 2\mathbf{x}_2^T\mathbf{y}_2 + \|\mathbf{x}_3\|_2^2 + \|\mathbf{y}_3\|_2^2 - 2\mathbf{x}_3^T\mathbf{y}_3 \\
    &= \|\mathbf{x}\|_2^2 + \|\mathbf{y}\|_2^2 - 2(\mathbf{x}_0^T\mathbf{y}_0 + \mathbf{x}_1^T\mathbf{y}_1 + \mathbf{x}_2^T\mathbf{y}_2 + \mathbf{x}_3^T\mathbf{y}_3), 
\end{align*}
and
\begin{align*}
    \Re(\mathbf{x}^H \mathbf{y}) &= \Re((\mathbf{x}_0 + \mathbf{x}_1i + \mathbf{x}_2j + \mathbf{x}_3k)^H(\mathbf{y}_0 + \mathbf{y}_1i + \mathbf{y}_2j + \mathbf{y}_3k)) \\
    &= \Re((\mathbf{x}_0 - \mathbf{x}_1i + \mathbf{x}_2j - \mathbf{x}_3k)^T (\mathbf{y}_0 + \mathbf{y}_1i + \mathbf{y}_2j + \mathbf{y}_3k)) \\
    &= \mathbf{x}_0^T\mathbf{y}_0 + \mathbf{x}_1^T\mathbf{y}_1 + \mathbf{x}_2^T\mathbf{y}_2 + \mathbf{x}_3^T\mathbf{y}_3.
\end{align*}
Thus
\[
    \|\mathbf{x} - \mathbf{y}\|_2^2 = \|\mathbf{x}\|_2^2 + \|\mathbf{y}\|_2^2 - 2\Re(\mathbf{x}^H \mathbf{y}),
\]
which completes the proof of part (a).

For part (b),  let $\mathbf{M} = \begin{bmatrix} \mathbf{x}_0 \\ \mathbf{x}_1 \\ \mathbf{x}_2 \\ \mathbf{x}_3 \end{bmatrix}, \ \mathbf{N} = \begin{bmatrix} \mathbf{y}_0 \\ \mathbf{y}_1 \\ \mathbf{y}_2 \\ \mathbf{y}_3 \end{bmatrix} \in \mathbb{R}^{4n \times 1}.$ Then, applying the Cauchy-Schwarz inequality
\begin{align*}
|\mathbf{M}^T \mathbf{N}| &\leq \|\mathbf{M}\|_2 \|\mathbf{N}\|_2, \\
\end{align*}
we obtain
\begin{flalign*}
|\Re(\mathbf{x}^H\mathbf{y})| = |\mathbf{M}^T \mathbf{N}| \leq \|\mathbf{x}\|_2 \|\mathbf{y}\|_2. 
\end{flalign*}
\end{proof}

\begin{theorem}
\label{min2norm}
Given the parameters $\beta>0, \lambda>0$, and let $\mathbf{x}, \mathbf{y} \in 
\mathbb{H}_c^{n\times 1}$.  The closed-form solution to the minimization problem 
\begin{flalign*}
\min_{\mathbf{x} \in \mathbb{H}_c^{n \times 1}} f(\mathbf{x}) = \frac{\beta}{2} \|\mathbf{x} - \mathbf{y}\|_2^2 + \lambda \|\mathbf{x}\|_2
\end{flalign*}
is given by
\begin{flalign*}
\hat{\mathbf{x}} = \max \left\{\|\mathbf{y}\|_2 - \frac{\lambda}{\beta}, \ 0 \right\} \frac{\mathbf{y}}{{\|\mathbf{y}\|_2}}.
\end{flalign*}
\end{theorem}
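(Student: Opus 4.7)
My plan is to reduce the vector-valued RB problem to a scalar minimization in $t = \|\mathbf{x}\|_2 \ge 0$ by using the two identities of Theorem \ref{lemma2.4}, and then verify that the proposed candidate $\hat{\mathbf{x}}$ simultaneously attains the scalar minimum and the equality case in the Cauchy--Schwarz-type bound.

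First I would dispose of the trivial case $\mathbf{y} = \mathbf{0}$: the objective becomes $\tfrac{\beta}{2}\|\mathbf{x}\|_2^2 + \lambda \|\mathbf{x}\|_2$, which is minimized at $\mathbf{x} = \mathbf{0}$, consistent with the formula if we interpret the factor $\mathbf{y}/\|\mathbf{y}\|_2$ as $\mathbf{0}$ when $\|\mathbf{y}\|_2=0$. So assume $\mathbf{y}\neq \mathbf{0}$. Using Theorem \ref{lemma2.4}(a),
\[
f(\mathbf{x}) = \frac{\beta}{2}\|\mathbf{x}\|_2^2 + \frac{\beta}{2}\|\mathbf{y}\|_2^2 - \beta\,\Re(\mathbf{x}^H\mathbf{y}) + \lambda\|\mathbf{x}\|_2,
\]
and Theorem \ref{lemma2.4}(b) gives $-\Re(\mathbf{x}^H\mathbf{y}) \ge -\|\mathbf{x}\|_2\|\mathbf{y}\|_2$. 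Hence, writing $t := \|\mathbf{x}\|_2 \ge 0$,
\[
f(\mathbf{x}) \ \ge\  g(t) \ :=\ \frac{\beta}{2}t^2 + \frac{\beta}{2}\|\mathbf{y}\|_2^2 - \beta t\,\|\mathbf{y}\|_2 + \lambda t.
\]

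Next I would minimize the one-dimensional convex function $g$ on $[0,\infty)$. Its derivative is $\beta t - \beta\|\mathbf{y}\|_2 + \lambda$, so the unconstrained minimizer is $t^\star = \|\mathbf{y}\|_2 - \lambda/\beta$. Projecting onto $[0,\infty)$ yields
\[
\hat t \ =\ \max\!\left\{\|\mathbf{y}\|_2 - \tfrac{\lambda}{\beta},\, 0\right\}.
\]
This is a global lower bound $f(\mathbf{x}) \ge g(\hat t)$ for every admissible $\mathbf{x}$.

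Finally I would show the candidate $\hat{\mathbf{x}} = \hat t\cdot \mathbf{y}/\|\mathbf{y}\|_2$ attains this lower bound, which requires two equalities: (i) $\|\hat{\mathbf{x}}\|_2 = \hat t$, which is immediate because the scalar $\hat t/\|\mathbf{y}\|_2$ is a nonnegative real multiple of $\mathbf{y}$; and (ii) equality in the Cauchy--Schwarz-type inequality, i.e.\ $\Re(\hat{\mathbf{x}}^H \mathbf{y}) = \|\hat{\mathbf{x}}\|_2\|\mathbf{y}\|_2$. For (ii), since $\hat t/\|\mathbf{y}\|_2 \in \mathbb{R}_{\ge 0}$, I compute directly $\hat{\mathbf{x}}^H \mathbf{y} = (\hat t/\|\mathbf{y}\|_2)\,\mathbf{y}^H\mathbf{y} = \hat t\,\|\mathbf{y}\|_2$, a nonnegative real number, and its real part equals $\hat t\,\|\mathbf{y}\|_2 = \|\hat{\mathbf{x}}\|_2\|\mathbf{y}\|_2$. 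Substituting back shows $f(\hat{\mathbf{x}}) = g(\hat t)$, confirming optimality.

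The only subtle point is making sure that the bound in Theorem \ref{lemma2.4}(b) can actually be saturated by an RB vector that is a real nonnegative scalar multiple of $\mathbf{y}$; the direct computation of $\hat{\mathbf{x}}^H\mathbf{y}$ above handles this without needing a characterization of the equality case in general. Uniqueness (if desired) would follow from strict convexity of the quadratic part together with the fact that equality in (b) forces $\mathbf{x}$ to be proportional to $\mathbf{y}$ by a nonnegative real scalar, reducing the problem to the strictly convex scalar function $g$ on $[0,\infty)$.
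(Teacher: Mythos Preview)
Your argument is correct and takes a genuinely different route from the paper's. The paper splits into two cases: when $\|\mathbf{y}\|_2 \le \lambda/\beta$ it shows $f(\mathbf{x})>f(\mathbf{0})$ for all $\mathbf{x}\neq\mathbf{0}$ directly via Theorem~\ref{lemma2.4}; when $\|\mathbf{y}\|_2 > \lambda/\beta$ it sets the (formal) gradient of $f$ to zero, takes $2$-norms of the resulting vector equation, and back-substitutes. Your approach is instead a unified lower-bound-and-attain argument: use Cauchy--Schwarz to bound $f(\mathbf{x})$ below by a scalar convex function $g$ of $t=\|\mathbf{x}\|_2$, minimize $g$ on $[0,\infty)$, and then exhibit an $\hat{\mathbf{x}}$ achieving the bound. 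What your approach buys is that it avoids any case split in the main computation and never appeals to differentiability of the nonsmooth term $\|\mathbf{x}\|_2$ at the origin; the paper's gradient argument is more mechanical but leaves the global-minimality of the stationary point implicit.

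One small correction to your attainment step: the assertion $\hat{\mathbf{x}}^H\mathbf{y}=(\hat t/\|\mathbf{y}\|_2)\,\mathbf{y}^H\mathbf{y}=\hat t\,\|\mathbf{y}\|_2$ is not literally true over $\mathbb{H}_c$, because $\mathbf{y}^H\mathbf{y}$ need not be real --- for a scalar $q=a+bi+cj+dk$ one computes $\bar q\,q=(a^2+b^2+c^2+d^2)+2(ac+bd)j$. What you actually need (and have) is only the real part: $\Re(\hat{\mathbf{x}}^H\mathbf{y})=(\hat t/\|\mathbf{y}\|_2)\,\Re(\mathbf{y}^H\mathbf{y})=\hat t\,\|\mathbf{y}\|_2$, since $\Re(\mathbf{y}^H\mathbf{y})=\|\mathbf{y}\|_2^2$ by the calculation in the proof of Theorem~\ref{lemma2.4}(a). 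With that adjustment the attainment is verified and your proof goes through.
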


\begin{proof}
We consider two cases based on the value of $\|\mathbf{y}\|_2$ relative to $\frac{\lambda}{\beta}$:

\textbf{Case 1:}  $\|\mathbf{y}\|_2 \leq \frac{\lambda}{\beta}$. We will prove that $\hat{\mathbf{x}} = \mathbf{0}$. 

For all $\mathbf{x} \neq \mathbf{0}$, it follows from  Theorem \ref{lemma2.4} that
\begin{align*}
f(\mathbf{x}) - f(\mathbf{0}) &=\frac{\beta}{2} \|\mathbb{\mathbf{x}} - \mathbf{y}\|_2^2 + \lambda \|\mathbf{x}\|_2 - \frac{\beta}{2} \|\mathbf{y}\|_2^2 \\
&= \frac{\beta}{2} (\|\mathbf{x}\|_2^2 + \|\mathbf{y}\|_2^2 - 2\Re(\mathbf{x}^H\mathbf{y})) + \lambda \|\mathbf{x}\|_2 - \frac{\beta}{2} \|\mathbf{y}\|_2^2 \\
&= \frac{\beta}{2} \|\mathbf{x}\|_2^2 - \beta\Re(\mathbf{x}^H\mathbf{y}) + \lambda \|\mathbf{x}\|_2.
\end{align*}
As proven in Theorem \ref{lemma2.4} part (b), we have  $|\Re(\mathbf{x}^H\mathbf{y})| \leq \|\mathbf{x}\|_2 \|\mathbf{y}\|_2$, and thus
\begin{align*}
f(\mathbf{x}) - f(\mathbf{0}) &\geq  \frac{\beta}{2} \|\mathbf{x}\|_2^2 - \beta\|\mathbf{x}\|_2 \|\mathbf{y}\|_2 + \lambda \|\mathbf{x}\|_2 \\
&\geq \frac{\beta}{2} \|\mathbf{x}\|_2^2 + \|\mathbf{x}\|_2 (\lambda - \beta\|\mathbf{y}\|_2).
\end{align*}
Note that $\lambda - \beta\|\mathbf{y}\|_2 \geq 0$, and thus $f(\mathbf{x}) - f(\mathbf{0}) > 0$, which implies that $\hat{\mathbf{x}} = \mathbf{0}$.

\textbf{Case 2:}  $\|\mathbf{y}\|_2 > \frac{\lambda}{\beta}$. We will  find the  $\hat{\mathbf{x}}$ which minimizes $f(\mathbf{x})$. 

By setting the gradient of $f(\mathbf{x})$ to be zero, we have
\begin{align*}
\beta(\mathbf{x} - \mathbf{y}) + \lambda \frac{\mathbf{x}}{\|\mathbf{x}\|_2} = 0. 
\end{align*}
Solving for $\mathbf{x}$ yields
\begin{align}
\mathbf{x}\left(\beta + \frac{\lambda}{\|\mathbf{x}\|_2}\right) - \beta\mathbf{y} = 0.
\label{tag1}
\end{align}
Now, taking the 2-norm of both sides gives
\begin{align*}
\|\mathbf{x}\|_2 \left(\beta + \frac{\lambda}{\|\mathbf{x}\|_2}\right) - \beta\|\mathbf{y}\|_2 = 0, 
\end{align*}
and thus
\begin{align}
\|\mathbf{x}\|_2 = \|\mathbf{y}\|_2 - \frac{\lambda}{\beta}.
\label{tag2}
\end{align}
Substituting \eqref{tag2} into \eqref{tag1}, we obtain
\begin{align*}
\mathbf{x}\left(\beta + \frac{\lambda}{\|\mathbf{y}\|_2 - \frac{\lambda}{\beta}}\right) - \beta\mathbf{y} = 0  , 
\end{align*}
from which we have
\begin{align*}
\hat{\mathbf{x}} = \frac{\|\mathbf{y}\|_2 - \frac{\lambda}{\beta}}{\|\mathbf{y}\|_2}\mathbf{y} . 
\end{align*}
\end{proof}

\section{Reduced biquaternion tensor decomposition}
First, we define the reduced biquaternion tensor ring decomposition, extending from the real-valued framework \cite{zhao2016tensor}. 

\begin{definition}
 (Reduced Biquaternion Tensor Ring (RBTR) Decomposition)\\
 Let $\mathcal{T} \in \mathbb{H}_c^{I_1 \times I_2 \times \ldots \times I_N}$. The reduced biquaternion tensor ring (RBTR) decomposition is to break down $\mathcal{T}$  into a series of latent tensors $\mathcal{Z}_k \in \mathbb{H}_c^{r_k \times I_k \times r_{k+1}}$ for $k=1,2,\ldots,N$. Each entry of $\mathcal{T}$ is defined through:
\begin{flalign}
    \mathcal{T}(i_1,i_2,\dots,i_N) = \mathrm{Tr}\{\mathbf{Z}_1(i_1)\mathbf{Z}_2(i_2) \cdots \mathbf{Z}_N(i_N)\}, \notag
\end{flalign}
where $\mathbf{Z}_k(i_k) \in \mathbb{H}_c^{r_k \times r_{k+1}}$ indicates the $i_{k}$-th lateral slice of the latent tensor $\mathcal{Z}_k$, and $r_{N+1}=r_{1}$. $\mathbf{r}=[r_1,r_2,\dots,r_N] $ is a vector called {\it RBTR rank} of $\mathcal{T}$. 

For simplicity's sake,  we refer to the RBTR decomposition as $\mathcal{T} = \mathfrak{TR}(\mathcal{Z}_1,\mathcal{Z}_2,\cdots, \mathcal{Z}_N)$. 
\end{definition}

Because the reduced biquaternions  satisfy multiplicative commutativity, the definition and related proofs are simpler than Hamilton quaternions, like 
 the property of Invariance of Circular Dimensional Permutation in Theorem \ref{Th1}. 
 
\begin{theorem}
\label{Th1}
(Invariance of Circular Dimensional Permutation) \\
 Let $\mathcal{T}\in\mathbb{H}_c^{I_1 \times I_2 \times \ldots \times I_N}$ be a RB tensor with a RBTR format $\mathfrak{TR}(\mathcal{Z}_1,\mathcal{Z}_2,\cdots, \mathcal{Z}_N)$. If we define $ \overleftarrow{\mathcal{T}}^k \in \mathbb{H}_c^{I_{k+1} \times I_{k+2} \times \cdots \times I_N \times I_1 \times I_2 \times \cdots \times I_k} $
 as moving the dimension of $\mathcal{T}$ circularly by $k$ steps, then we have $ \overleftarrow{\mathcal{T}}^k = \mathfrak{TR}(\mathcal{Z}_{k+1}, \cdots, \mathcal{Z}_N, \mathcal{Z}_1, \cdots, \mathcal{Z}_k).$

\end{theorem}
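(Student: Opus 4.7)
The plan is to reduce the statement to the cyclic invariance of the trace for reduced biquaternion matrices. Since $\overleftarrow{\mathcal{T}}^k$ is defined simply by reindexing the modes of $\mathcal{T}$, we have
\[
\overleftarrow{\mathcal{T}}^k(i_{k+1},\ldots,i_N,i_1,\ldots,i_k) \;=\; \mathcal{T}(i_1,\ldots,i_N) \;=\; \mathrm{Tr}\bigl(\mathbf{Z}_1(i_1)\mathbf{Z}_2(i_2)\cdots \mathbf{Z}_N(i_N)\bigr).
\]
So the task becomes to show that this trace equals $\mathrm{Tr}\bigl(\mathbf{Z}_{k+1}(i_{k+1})\cdots\mathbf{Z}_N(i_N)\mathbf{Z}_1(i_1)\cdots\mathbf{Z}_k(i_k)\bigr)$, because that right-hand side is exactly the RBTR format of $\overleftarrow{\mathcal{T}}^k$ with cores $\mathcal{Z}_{k+1},\ldots,\mathcal{Z}_N,\mathcal{Z}_1,\ldots,\mathcal{Z}_k$ (the ring closes up consistently since the dimensions chain as $r_{k+1},r_{k+2},\ldots,r_N,r_1,\ldots,r_k,r_{k+1}$).

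The key auxiliary fact I would establish first is that for any $\mathbf{A}\in\mathbb{H}_c^{m\times n}$ and $\mathbf{B}\in\mathbb{H}_c^{n\times m}$, $\mathrm{Tr}(\mathbf{A}\mathbf{B})=\mathrm{Tr}(\mathbf{B}\mathbf{A})$. I would prove this by the same entrywise expansion as in the real or complex case:
\[
\mathrm{Tr}(\mathbf{A}\mathbf{B})=\sum_{i}\sum_{j} a_{ij}b_{ji}=\sum_{j}\sum_{i} b_{ji}a_{ij}=\mathrm{Tr}(\mathbf{B}\mathbf{A}),
\]
where the middle equality is exactly where the commutativity of $\mathbb{H}_c$ is used (each scalar product $a_{ij}b_{ji}=b_{ji}a_{ij}$), which is why the analogous statement would require more care in the Hamilton quaternion setting.

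Applying this with $\mathbf{A}=\mathbf{Z}_1(i_1)\mathbf{Z}_2(i_2)\cdots\mathbf{Z}_k(i_k)$ and $\mathbf{B}=\mathbf{Z}_{k+1}(i_{k+1})\cdots\mathbf{Z}_N(i_N)$ immediately yields the desired identity, and substituting back into the reindexing relation finishes the proof. The main obstacle, such as it is, is purely bookkeeping: verifying that the two matrix products $\mathbf{A}$ and $\mathbf{B}$ are well-defined (the rank vector produces compatible factor sizes) and that after cyclic rotation the closure condition $r_{N+1}=r_1$ is preserved under the new labeling. The substantive non-trivial input is the trace cyclicity for $\mathbb{H}_c$-matrices, which as noted collapses to an elementary computation precisely because $\mathbb{H}_c$ is commutative.
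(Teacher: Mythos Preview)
Your proposal is correct and follows essentially the same approach as the paper: both proofs first establish $\mathrm{Tr}(\mathbf{A}\mathbf{B})=\mathrm{Tr}(\mathbf{B}\mathbf{A})$ for reduced biquaternion matrices via the entrywise expansion (explicitly invoking the commutativity of $\mathbb{H}_c$), and then apply this cyclic invariance to the product $\mathbf{Z}_1(i_1)\cdots\mathbf{Z}_N(i_N)$ to rotate the first $k$ factors to the end.
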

\begin{proof}

Note that $\mathbb{H}_c$ is a commutative ring.  For any  RB matrices $\mathbf{P} = (p_{ij}) \in \mathbb{H}_c^{M \times N}, \ \mathbf{Q} = (q_{ij}) \in \mathbb{H}_c^{N\times M}$, we have the following same trace property as real matrices.
\[
\mathrm{Tr}(\mathbf{P}\mathbf{Q}) = \sum_{i=1}^M \sum_{j=1}^N p_{ij} q_{ji}  = \sum_{j=1}^N \sum_{i=1}^M q_{ji}p_{ij} = \mathrm{Tr}(\mathbf{Q}\mathbf{P}).
\]
Clearly, this property is also true for the product of multiple RB matrices. Therefore, we can easily prove the following process
\begin{flalign*}
    &\overleftarrow{\mathcal{T}}^k (i_{k+1}, \ldots, i_{N}, i_{1}, \ldots, i_{k}) \\
     &=\mathcal{T} (i_{1}, i_{2}, \ldots, i_{N})   \\
     &= \mathrm{Tr}\{\mathbf{Z}_{1}(i_{1}) \cdots \mathbf{Z}_{k}(i_{k}) \mathbf{Z}_{k+1}(i_{k+1}) \cdots \mathbf{Z}_{N}(i_{N})  \} \\
     &= \mathrm{Tr}\{\mathbf{Z}_{k+1}(i_{k+1}) \cdots \mathbf{Z}_{N}(i_{N}) \mathbf{Z}_{1}(i_{1}) \cdots \mathbf{Z}_{k}(i_{k}) \} .  
\end{flalign*}
The above proof demonstrates that moving the dimension of \(\mathcal{T}\) circularly by \(k\) steps retains the RBTR format and 
$ \overleftarrow{\mathcal{T}}^k = \mathfrak{TR}(\mathcal{Z}_{k+1}, \cdots, \mathcal{Z}_N, \mathcal{Z}_1, \cdots, \mathcal{Z}_k).$
\end{proof}

Next, we merge the cores of $\mathcal{T} = \mathfrak{TR}(\mathcal{Z}_1,\mathcal{Z}_2,\cdots, \mathcal{Z}_N)\in \mathbb{H}_c^{I_1 \times I_2 \times \ldots \times I_N}$  in the following two ways. The new cores are called the {\it RB subchain tensors}.

The first way is to merge the first $k$ cores $\mathcal{Z}_1 \ldots \mathcal{Z}_{k}$ into a new core $\mathcal{Z}^{\leq k} \in \mathbb{H}_c^{r_1 \times \prod_{j=1}^{k} I_j \times r_{k+1}}$ whose lateral slice matrices are described as:
\begin{equation}
\begin{aligned}
    \mathbf{Z}^{\leq k}(:,\overline{i_1 \ldots i_{k}},:) & = \prod_{j=1}^{k} \mathbf{Z}_j(i_j), \notag 
\end{aligned}
\end{equation}
where $\overline{i_1 \ldots i_{k}} = i_1 + (i_2 - 1)I_1 + (i_3 - 1)I_1I_2 + \ldots + (i_k - 1) \prod_{j=1}^{k-1}I_j $. 

The second way is, in a similar way, to merge the last $N-k$ cores $\mathcal{Z}_{k+1} \ldots \mathcal{Z}_{N}$ into a new core $\mathcal{Z}^{> k}\in \mathbb{H}_c^{r_{k+1} \times \prod_{j=k+1}^{N} I_j \times r_{1}}$, whose lateral slice matrices are described as
\begin{equation}
\begin{aligned}
    \mathbf{Z}^{> k}(:,\overline{i_{k+1} \ldots i_{N}},:) & = \prod_{j={k+1}}^N \mathbf{Z}_j(i_j). \notag 
\end{aligned}
\end{equation}
where $\overline{i_{k+1} \ldots i_{N}} = i_{k+1} + (i_{k+2} - 1)I_{k+1} + (i_{k+3} - 1)I_{k+1}I_{k+2} + \ldots + (i_{N} - 1) \prod_{j=1}^{N-1}I_j $.

In the rest of this paper, we will use the following three common unfolding methods for reduced biquaternion tensors, which are extended from those for real-valued tensors \cite{kolda2009tensor}. As usual, we will use three kinds of brackets for these three unfolding methods like $\mathbf{T}_{(k)}, \mathbf{T}_{[k]}$ and $\mathbf{T}_{\langle k \rangle}$.

{\bf Classical Mode-$k$ Unfolding:}
For ${\mathcal{T}} \in \mathbb{H}_c^{I_1 \times I_2 \times \ldots \times I_N }$, let  ${\mathbf{T}}_{(k)} \in \mathbb{H}_c^{I_k \times \prod_{l \neq k} I_{l}}$ be the classical mode-k unfolding of  ${\mathcal{T}}$. The tensor element indexed by $(i_{1},i_{2},\dots ,i_{N})$ of $\mathcal{T}$ maps to the matrix element at position $(i_k,j)$-th of ${\mathbf{T}}_{(k)}$, i.e., 
\begin{flalign}
    {\mathcal{T}}(i_1,i_2,\dots,i_N) = {\mathbf{T}}_{(k)}(i_k,j), \notag
\end{flalign}
where $j = i_1 + (i_2-1)I_1 + \dots + (i_{k-1}-1) \prod_{l=1}^{k-2} I_l + (i_{k+2}-1)\prod_{l=1,l \neq k}^{k+1} I_l + \dots + (i_{N}-1) \prod_{l=1,l \neq k}^{N-1}I_l. $

{\bf Mode-$k$ Unfolding:} 
For ${\mathcal{T}} \in \mathbb{H}_c^{I_1 \times I_2 \times \ldots \times I_N }$, let  ${\mathbf{T}}_{[k]} \in \mathbb{H}_c^{I_k \times \prod_{l \neq k} I_{l}}$ be  the mode-k unfolding of ${\mathcal{T}}$. The tensor element indexed by $(i_{1},i_{2},\dots,i_{N})$ of $\mathcal{T}$ maps to the matrix element at position $(i_k,j)$-th of ${\mathbf{T}}_{[k]}$, i.e., 
\begin{flalign}
    {\mathcal{T}}(i_1,i_2,\dots,i_N) = {\mathbf{T}}_{[k]}(i_k,j), \notag
\end{flalign}
where $j=i_{k+1}+(i_{k+2}-1)I_{k+1}+\dots+ (i_{N}-1)\prod_{l=k+1}^{N-1} I_l + \dots+(i_{k-1}-1)\prod_{l=k+1}^{k-2} I_l. $

{\bf $k$-mode  Unfolding:} 
For  ${\mathcal{T}} \in \mathbb{H}_c^{I_1 \times I_2 \times \ldots \times I_N }$, let ${\mathbf{T}}_{\langle k \rangle} \in \mathbb{H}_c^{\prod_{l=1}^{k}{I_{l} \times \prod_{l=k+1}^{N} {I_{l}}} }$ be the k-mode  unfolding of  ${\mathcal{T}}$. The tensor element indexed by $(i_{1},i_{2},\dots ,i_{N})$ of $\mathcal{T}$ maps to the matrix element at position $(i,j)$-th of ${\mathbf{T}}_{\langle k \rangle}$, i.e., 
\begin{flalign}
    {\mathcal{T}}(i_1,i_2,\dots,i_N) = {\mathbf{T}}_{\langle k \rangle}(i,j), \notag
\end{flalign}
where $i = i_1 + (i_2 - 1)I_1 + \dots + (i_{k} - 1) \prod_{l=1}^{k-1} I_l,\ j=i_{k+1}+(i_{k+2}-1)I_{k+1} + \dots + (i_{N} - 1) \prod_{l=k+1}^{N-1} I_l. $
 
The three unfolding methods have the following relation in term of subchain tensors.

\begin{theorem}\label{tk}
Let $\mathcal{T} = \mathfrak{TR}(\mathcal{Z}_1,\mathcal{Z}_2,\cdots, \mathcal{Z}_N) \in \mathbb{H}_c^{I_1 \times I_2 \times \ldots \times I_N}$ represent a RB tensor  structured in RBTR format. Then its $k$-mode unfolding  $\mathbf{T}_{\langle k \rangle}$ can be decomposed by using (classical) mode-k unfoldings of RB subchain tensors, that is,   
\begin{equation}
    \mathbf{T}_{\langle k \rangle} = \mathbf{Z}^{\leq k}_{(2)} \ (\mathbf{Z}^{>k}_{[2]})^T. 
\label{Tk}
\end{equation}
\end{theorem}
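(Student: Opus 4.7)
The plan is to compute an arbitrary entry of the left-hand side by expanding the trace in the RBTR formula, and then recognize the resulting double sum as a matrix product of the two prescribed unfoldings of the subchain tensors. Because $\mathbb{H}_c$ is commutative and the trace cycling property established in the proof of Theorem \ref{Th1} holds, no reordering issues arise beyond plain index bookkeeping.

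Concretely, I would fix indices $(i_1,\ldots,i_N)$ and use the definition of the RB subchain tensors to split the long product into two factors:
\[
\mathcal{T}(i_1,\ldots,i_N)=\mathrm{Tr}\bigl\{\mathbf{Z}^{\leq k}(:,\overline{i_1\ldots i_k},:)\;\mathbf{Z}^{>k}(:,\overline{i_{k+1}\ldots i_N},:)\bigr\}.
\]
Since $\mathbf{Z}^{\leq k}(:,\overline{i_1\ldots i_k},:)\in\mathbb{H}_c^{r_1\times r_{k+1}}$ and $\mathbf{Z}^{>k}(:,\overline{i_{k+1}\ldots i_N},:)\in\mathbb{H}_c^{r_{k+1}\times r_1}$, expanding the trace gives
\[
\mathcal{T}(i_1,\ldots,i_N)=\sum_{\alpha_1=1}^{r_1}\sum_{\alpha_{k+1}=1}^{r_{k+1}}\mathcal{Z}^{\leq k}(\alpha_1,\overline{i_1\ldots i_k},\alpha_{k+1})\,\mathcal{Z}^{>k}(\alpha_{k+1},\overline{i_{k+1}\ldots i_N},\alpha_1).
\]

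Next I would match this double sum to the $(\overline{i_1\ldots i_k},\overline{i_{k+1}\ldots i_N})$ entry of $\mathbf{Z}^{\leq k}_{(2)}(\mathbf{Z}^{>k}_{[2]})^T$. By the definition of the classical mode-$2$ unfolding, the row $\overline{i_1\ldots i_k}$ of $\mathbf{Z}^{\leq k}_{(2)}$ lists the entries $\mathcal{Z}^{\leq k}(\alpha_1,\overline{i_1\ldots i_k},\alpha_{k+1})$ indexed by the combined column index running first over $\alpha_1$ and then over $\alpha_{k+1}$. By the definition of the mode-$2$ (non-classical) unfolding, the row $\overline{i_{k+1}\ldots i_N}$ of $\mathbf{Z}^{>k}_{[2]}$ lists the entries $\mathcal{Z}^{>k}(\alpha_{k+1},\overline{i_{k+1}\ldots i_N},\alpha_1)$ with the combined column index cycled so that $\alpha_1$ again varies fastest and $\alpha_{k+1}$ slowest. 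Consequently the two row-vectors are indexed compatibly, and the inner product of the two rows reproduces exactly the double sum above. By the definition of $\mathbf{T}_{\langle k\rangle}$, its $(\overline{i_1\ldots i_k},\overline{i_{k+1}\ldots i_N})$ entry equals $\mathcal{T}(i_1,\ldots,i_N)$, so we obtain \eqref{Tk} entrywise.

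The main obstacle is purely notational: one has to verify that the two chosen unfolding conventions (classical mode-$2$ for $\mathcal{Z}^{\leq k}$ versus mode-$2$ for $\mathcal{Z}^{>k}$) produce combined column orderings that align when one of them is transposed, so that the summations over $\alpha_1$ and $\alpha_{k+1}$ in the matrix product line up with the same summations in the trace. Once this alignment is spelled out from the formulas given just before the theorem, the result follows immediately from the commutativity of $\mathbb{H}_c$ and the cyclic property of the trace; no analytic input is needed.
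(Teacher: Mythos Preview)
Your proposal is correct and follows essentially the same route as the paper: both compute an arbitrary entry of $\mathbf{T}_{\langle k\rangle}$ by splitting the trace into the two subchain factors and then identifying the resulting sum with the corresponding entry of $\mathbf{Z}^{\leq k}_{(2)}(\mathbf{Z}^{>k}_{[2]})^T$. The only cosmetic difference is that the paper packages the trace expansion via the identity $\mathrm{Tr}(\mathbf{A}\mathbf{B})=(\mathrm{Vec}(\mathbf{A}))^\top\mathrm{Vec}(\mathbf{B}^\top)$, whereas you write out the double sum over $(\alpha_1,\alpha_{k+1})$ and verify the index alignment of the two unfoldings directly.
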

\begin{proof}
By the definition of $\mathbf{T}_{\langle k \rangle}$, 
\begin{flalign}
\label{v}
    \mathbf{T}_{\langle k \rangle}(i,j) &=\mathcal{T}(i_1,i_2,\dots,i_N) \notag \\[3mm] 
    &=\mathrm{Tr} \left\{ \mathbf{Z}_1(i_1) \mathbf{Z}_2(i_2) \cdot\cdot\cdot \mathbf{Z}_N(i_N) \right\}  \notag \\ 
    &= \mathrm{Tr} \left\{ \prod_{j=1}^{k} \mathbf{Z}_j(i_j) \prod_{j=k+1}^{N} \mathbf{Z}_j(i_j) \right\}.  \notag \\ 
\end{flalign}
Recall that  the vector operator $\mathrm{Vec}( \cdot )$ is the column vector 
obtained by stacking the columns of the matrix in order. By $\text{Tr}(\mathbf{A}\mathbf{B}) = (\mathrm{Vec}(\mathbf{A}))^\top \mathrm{Vec}(\mathbf{B}^\top)$, (\ref{v}) becomes
\begin{flalign*}
    \mathbf{T}_{\langle k \rangle}(i,j) 
    & = \left( \mathrm{Vec} \left( \prod_{j=1}^{k} \mathbf{Z}_j(i_j) \right)\right)^\top  \mathrm{Vec} \left( \prod_{j=N}^{k+1} \mathbf{Z}^T_j(i_j) \right) \notag \\
    &= \sum_{p=1}^{r_1 r_{k+1}} \mathbf{Z}_{(2)}^{\leq k}(i,p) \left( \mathbf{Z}_{[2]}^{> k} \right)^\top (p,j),
\end{flalign*}

where $i = \overline{i_1 i_2 \ldots i_k}$ and $j = \overline{i_{k+1} i_{k+2} \ldots i_N}$.  From the formula of $\mathbf{T}_{\langle k \rangle}(i,j)$, we have
\begin{equation}
    \mathbf{T}_{\langle k \rangle} = \mathbf{Z}^{\leq k}_{(2)} \ (\mathbf{Z}^{>k}_{[2]})^\top. \notag
\end{equation}  
\end{proof}
Initially, we utilize the truncated SVD technique to extract the principal characteristics  of the tensor's 1-mode unfolding matrix $\mathbf{T}_{\langle 1 \rangle} \in \mathbb{H}_c^{I_1 \times \prod_{l=2}^{N}{I_{l} }}$. First, a threshold $\delta_1 = \sqrt{2} \epsilon \frac{\| \mathcal{T} \|_F}{\sqrt{N}}
$ is set to determine the rank of the approximation, as described in \cite{zhao2016tensor}. In this context, $\epsilon$ is the predefined tolerance level. Consequently, \( \mathbf{T}_{\langle 1 \rangle} \) can be approximated as a low-rank matrix by retaining only the singular values that exceed the threshold \( \delta_1 \): 
\begin{equation}
\mathbf{T}_{\langle 1 \rangle} = \mathbf{U}_{1} \mathbf{\Sigma}_{1} \mathbf{V}_{1}^H + \mathbf{E}_{1}, 
\label{T1}
\end{equation}
where $\mathbf{U}_{1} \in \mathbb{H}_c^{I_1 \times r_1r_2} $ and $\mathbf{\Sigma}_{1} \in \mathbb{H}_c^{r_1r_2 \times r_1r_2}$ and  $\mathbf{V}_{1}^H \in \mathbb{H}_c^{r_1r_2 \times \prod_{l=2}^{N}{I_{l} }}.$

By Theorem \ref{tk}, $\mathbf{T}_{\langle 1 \rangle}$ can be rewritten as 

\[
\mathbf{T}_{\langle 1 \rangle} = \mathbf{Z}^{\leq 1}_{(2)} (\mathbf{Z}^{>1}_{[2]})^\top. 
\]
Consider that $\mathbf{Z}^{\leq 1}_{(2)}$ is equivalent to $\mathbf{U}_1$ and $ (\mathbf{Z}^{>1}_{[2]})^T$  corresponds to \( \mathbf{\Sigma}_1 \mathbf{V}_1^H \). Using these, the primary core tensor \( \mathcal{Z}_1 \) of size $r_1 \times I_1 \times r_2$ is derived by appropriately reorganizing and switching the elements of \( \mathbf{U}_1 \). Meanwhile, $ \mathcal{Z}^{>1} $ of size $r_2 \times \prod_{j=2}^{N} I_j \times r_1 $ is derived by appropriately reorganizing and switching the elements of  \( \mathbf{\Sigma}_1 \mathbf{V}_1^H \). We can reshape  $ \mathcal{Z}^{>1} $ into matrix  $ \mathbf{Z}^{>1} $  of size $r_2I_2 \times \prod_{j=3}^{N} I_j r_1 $, and continue to apply truncated SVD by using  $\delta_2= \epsilon \frac{\| \mathcal{T} \|_F}{\sqrt{N}}$ on it:
\begin{equation}
\mathbf{Z}^{>1} = \mathbf{U}_{2} \mathbf{\Sigma}_{2} \mathbf{V}_{2}^H + \mathbf{E}_{2}, \notag
\end{equation}
where $\mathbf{U}_{2} \in \mathbb{H}_c^{r_2I_2 \times r_3}$, $\mathbf{\Sigma}_{2} \in \mathbb{H}_c^{r_3 \times r_3}$ and  $\mathbf{V}_{2}^H \in \mathbb{H}_c^{r_3 \times \prod_{j=3}^{N} I_j r_1} $. Similarly, the second core tensor \( \mathcal{Z}_2 \) of size $r_2 \times I_2 \times r_3$ is derived by appropriately reorganizing and switching the elements of \( \mathbf{U}_2 \). Following the same method, we can get the remaining cores  \( \mathcal{Z}_k \) by setting  $\delta_k= \epsilon \frac{\| \mathcal{T} \|_F}{\sqrt{N}}$, for $k>1$.

In the following, $\textbf{Algorithm 1}$ specifies the RBTR-SVD algorithm's detailed process.
\begin{figure}[!ht]
      \centering  % Centers the algorithm in the figure
    \begin{algorithm}[H]
        \caption{RBTR-SVD}
        \begin{algorithmic}[1]
            \Require 
                A RB tensor $\mathcal{T} \in {\mathbb{H}_c}^{I_{1}\times I_{2}\times \dots \times I_{N}}$ and the tolerance level $\epsilon$. 
            \Ensure Cores ${\mathcal{Z}_k}  \ (k = 1, \ldots, N)$ and the RBTR ranks\\
            Set the truncation threshold: for $k = 1$, $\delta_1 = \sqrt{2} \epsilon \frac{\| \mathcal{T} \|_F}{\sqrt{N}}$, and for $k > 1$, $\delta_k = \epsilon \frac{\| \mathcal{T} \|_F}{\sqrt{N}}$. \\

            Select the initial mode and apply a low-rank approximation to $\mathbf{T}_{\langle 1 \rangle}$  by \eqref{T1} 
            with the threshold $\delta_1$ \\
    	   Determine ranks $r_1$ and $r_2$: 
    		\[ \min_{r_1, r_2} || r_1 - r_2 || \quad \text{subject to} \quad r_1 r_2 = \text{rank}_{\delta_1}\left( \mathbf{T}_{\langle 1 \rangle} \right) \] \\
 	   Compute the first core tensor: 
    \[{ \mathcal{Z}}_1 = \text{permute}\left( \text{reshape}\left( \mathbf{U}_{1} , [I_1, r_1, r_2] \right), [2, 1, 3] \right) \] \\
           Derive another core tensor $\mathcal{Z}^{>1}$: 
    \[ \mathcal{Z}^{>1} = \text{permute}\left( \text{reshape}\left(  \mathbf{\Sigma}_{1} \mathbf{V}_{1}^H, [r_1, r_2, \prod_{j=2}^{N}I_j] \right), [2, 3, 1] \right)\]
                \For {$k=2$ to $N-1$}
                    \State  $\mathbf{Z}^{>k-1}= \text{reshape}\left(\mathcal{Z}^{>k-1}, [r_k I_k, \prod_{j=k+1}^{N}I_jr_1] \right)$
                    \State $\mathbf{Z}^{>k-1} = \mathbf{U}_{k} \mathbf{\Sigma}_{k} \mathbf{V}_{k}^H + \mathbf{E}_{k}$ 
		   \State $r_{k+1} = \text{rank}_{\delta_k}\left( \mathbf{Z}^{>k-1} \right)$ 
		   \State $\dot{ \mathcal{Z}}_k = \text{reshape}\left( \mathbf{U}_{k}, [r_k, I_k, r_{k+1}] \right)$ 
		   \State $ \mathcal{Z}^{>k} = \text{reshape}\left( \mathbf{\Sigma}_{k} \mathbf{V}_{k}^H , [r_{k+1}, \prod_{j=k+1}^{N}I_j ,r_1] \right)$ 
                \EndFor        
        \end{algorithmic}
    \end{algorithm}
\end{figure}

Finally, we validate the effectiveness of RBTR-SVD by comparing storage costs, compression ratios, PSNR, and RSE under various degrees of relative errors. For our analysis, we select ten color images\footnote{https://sipi.usc.edu/database/} with the size of $256\times256\times3$. Both of RBTR-SVD and TR-SVD are applied to these images and the comparative results are presented  in Table \ref{RBTR-SVD-result}. 

The storage cost is defined as \( \sum_{k=1}^{K} N_k\), where \(N_k\) is the number of elements in the \(k\)-th core tensor. The compression ratio is defined as $ \frac{N}{S} $, where $N$ is the total number of elements in the original tensor, and $S$ is the storage cost which equals the sum of the elements in the core tensors.

The relative error RSE measures the difference between the original tensor $\mathcal{X}$ and the recovered tensor $\hat{\mathcal{X}}$, which is 
\begin{flalign}
\mbox{RSE} = \frac{ \lVert \hat{\mathcal{X}}-\mathcal{X} \rVert_{F}  }{ \lVert \mathcal{X} \rVert_{F}}.
\label{RSE}
\end{flalign}

Define
\begin{flalign}
\mbox{PSNR} =10\log_{10}\left(\frac {\mbox{max}^2}{\lVert \hat{\mathcal{X}}-\mathcal{X} \rVert_{F} /N}\right),
\label{PSNR}
\end{flalign}
where $max$ equals  the original image data's maximum pixel value,  and $N$ represents the total number of elements in the tensor.

\begin{table}[!ht]
    \begin{center}
    \caption{RSE, PSNR, Storage Costs, and Compression Ratios (in parentheses) of the algorithms}
    \label{RBTR-SVD-result}
    \resizebox{\textwidth}{!}{%
    \begin{tabular}{cccccccc}
        \hline
        \multicolumn{1}{c}{\multirow{2}{*}{Relative Error}} & \multicolumn{1}{c}{\multirow{2}{*}{Image}} & \multicolumn{3}{c}{TR-SVD} & \multicolumn{3}{c}{RBTR-SVD} \\ 
        \cline{3-8} 
        & & RSE & PSNR & Storage Cost & RSE & PSNR & Storage Cost \\ \hline
        \multirow{10}{*}{0.005} 
        & Airplane & 2.91e-03 & 53.39 & 260784 (0.75) & \textbf{1.96e-03} & \textbf{56.82} & \textbf{96676 (2.03)} \\ 
        & Peppers & 2.37e-03 & 58.40 & 296748 (0.66) & \textbf{1.60e-03} & \textbf{61.82} & \textbf{98576 (1.99)} \\ 
        & Baboon & 2.13e-03 & 58.75 & 302385 (0.65) & \textbf{2.06e-14} & \textbf{279.07} & \textbf{100496 (1.96)} \\ 
        & Female & 1.69e-03 & 66.33 & 301737 (0.65) & \textbf{1.13e-14} & \textbf{289.85} & \textbf{100496 (1.96)} \\ 
        & House1 & 3.27e-03 & 54.32 & 266577 (0.74) & \textbf{2.21e-03} & \textbf{57.72} & \textbf{97320 (2.02)} \\ 
        & Tree & 2.45e-03 & 57.41 & 295544 (0.67) & \textbf{1.70e-03} & \textbf{60.58} & \textbf{99216 (1.98)} \\
        & Butterfly & 2.95e-03 & 56.31 & 263236 (0.75) & \textbf{1.91e-03} & \textbf{60.09} & \textbf{97936 (2.01)} \\ 
        & House2 & 3.08e-03 & 54.04 & 241873 (0.81) & \textbf{1.72e-03} & \textbf{59.09} & \textbf{98576 (1.99)} \\ 
        & Baby & 3.31e-03 & 53.63 & 205937 (0.95) & \textbf{3.23e-03} & \textbf{54.04} & \textbf{91812 (2.14)} \\ 
        & Bird & 3.12e-03 & 59.20 & 238109 (0.83) & \textbf{3.02e-03} & \textbf{59.49} & \textbf{89976 (2.19)} \\  \hline
        \multirow{10}{*}{0.015} 
        & Airplane & 1.05e-02 & 42.29 & 114105 (1.72) & \textbf{8.94e-03} & \textbf{43.64} & \textbf{81264 (2.42)} \\ 
        & Peppers & 8.61e-03 & 47.20 & 237677 (0.83) & \textbf{8.23e-03} & \textbf{47.59} & \textbf{88224 (2.23)} \\ 
        & Baboon & 8.52e-03 & 46.72 & 284832 (0.69) & \textbf{5.16e-03} & \textbf{51.08} & \textbf{98576 (1.99)} \\ 
        & Female & 7.80e-03 & 53.07 & 319600 (0.62) & \textbf{6.63e-03} & \textbf{54.48} & \textbf{97320 (2.02)} \\ 
        & House1 & 9.64e-03 & 44.94 & 209356 (0.94) & \textbf{9.05e-03} & \textbf{45.48} & \textbf{78544 (2.50)} \\ 
        & Tree & 1.03e-02 & 44.92 & 215125 (0.91) & \textbf{8.67e-03} & \textbf{46.42} & \textbf{90668 (2.17)} \\ 
        & Butterfly & 9.82e-03 & 45.87 & 167912 (1.17) & \textbf{9.13e-03} & \textbf{46.50} & \textbf{89392 (2.20)} \\ 
        & House2 & 8.75e-03 & 44.97 & 214032 (0.92) & \textbf{8.39e-03} & \textbf{45.35} & \textbf{87608 (2.24)} \\ 
        & Baby & 1.04e-02 & 43.87 & 117613 (1.67) & \textbf{1.01e-02} & \textbf{44.09} & \textbf{68304 (2.88)} \\ 
        & Bird & 9.98e-03 & 49.11 & 166137 (1.18) & \textbf{9.27e-03} & \textbf{49.75} & \textbf{70504 (2.79)} \\ \hline
        
        \multirow{10}{*}{0.025} 
        & Airplane & 1.80e-02 & 37.42 & 115384 (1.70) & \textbf{1.74e-02} & \textbf{37.84} & \textbf{65704 (2.99)} \\ 
        & Peppers & 1.66e-02 & 41.51 & 210600 (0.93) & \textbf{1.45e-02} & \textbf{42.65} & \textbf{77320 (2.54)} \\ 
        & Baboon & 1.62e-02 & 41.13 & 249149 (0.79) & \textbf{1.29e-02} & \textbf{43.11} & \textbf{94792 (2.07)} \\ 
        & Female & 1.43e-02 & 47.79 & 262504 (0.75) & \textbf{1.41e-02} & \textbf{47.95} & \textbf{91216 (2.17)} \\ 
        & House1 & 1.89e-02 & 39.11 & 141368 (1.41) & \textbf{1.74e-02} & \textbf{39.79} & \textbf{59176 (3.32)} \\ 
        &Tree & 1.50e-02 & 41.64 & 199016 (0.99) & \textbf{1.49e-02} & \textbf{41.74} & \textbf{81324 (2.42)} \\ 
        & Butterfly & 1.70e-02 & 41.08 & 125729 (1.56) & \textbf{1.61e-02} & \textbf{41.58} & \textbf{80528 (2.44)} \\ 
        & House2 & 1.82e-02 & 38.64 & 127381 (1.54) & \textbf{1.49e-02} & \textbf{40.34} & \textbf{75716 (2.60)} \\ 
        & Baby & 1.96e-02 & 38.42 & 81233 (2.42) & \textbf{1.94e-02} & \textbf{38.45} & \textbf{50376 (3.90)} \\ 
        & Bird & \textbf{1.68e-02} & \textbf{44.58} & 148500 (1.32) & 1.87e-02 & 43.65 & \textbf{55884 (3.52)} \\ \hline
    
        \multirow{10}{*}{0.05} 
        & Airplane & 3.95e-02 & 30.73 & 58146 (3.38) & \textbf{3.58e-02} & \textbf{31.60} & \textbf{41556 (4.73)} \\ 
        & Peppers & 3.45e-02 & 35.15 & 131364 (1.50) & \textbf{3.20e-02} & \textbf{35.81} & \textbf{53024 (3.71)} \\ 
        & Baboon & 3.43e-02 & 34.61 & 193380 (1.02) & \textbf{2.81e-02} & \textbf{36.37} & \textbf{84660 (2.32)} \\ 
        & Female & 3.95e-02 & 38.97 & 144033 (1.37) & \textbf{3.42e-02} & \textbf{40.23} & \textbf{71356 (2.76)} \\
        & House1 & 4.15e-02 & 32.26 & 50718 (3.88) & \textbf{3.74e-02} & \textbf{33.17} & \textbf{26532 (7.41)} \\ 
        & Tree & 3.40e-02 & 34.55 & 112576 (1.75) & \textbf{3.25e-02} & \textbf{34.94} & \textbf{58760 (3.35)} \\ 
        & Butterfly & 3.37e-02 & 35.17 & 96984 (2.03) & \textbf{3.29e-02} & \textbf{35.37} & \textbf{60880 (3.23)} \\ 
        & House2 & 3.91e-02 & 31.98 & 68053 (2.89) & \textbf{3.70e-02} & \textbf{32.46} & \textbf{48208 (4.08)} \\ 
        & Baby & \textbf{3.57e-02} & 31.98 & 52108 (3.77) & 4.09e-02 & 31.98 & \textbf{24072 (8.17)} \\ 
        & Bird & 3.34e-02 & 38.42 & 91864 (2.14) & \textbf{3.33e-02} & \textbf{38.64} & \textbf{35344 (5.56)} \\ 
        \hline
    \end{tabular}
    }
    \end{center}
\end{table}
Table \ref{RBTR-SVD-result} shows that, in most images, RBTR-SVD achieves higher PSNR and lower RSE compared to TR-SVD. Additionally, as the relative error level increases, the differences in PSNR and RSE between the two methods tend to decrease. However, RBTR-SVD offers a substantial reduction in storage cost at the same relative error level. Experimental results demonstrate that reduced biquaternion tensor ring decomposition is an effective representation method, reducing data size efficiently while retaining substantial information.

\section{Reduced biquaternion tensor completion}
As mentioned in Section 1, the low-rank reduced biquaternion tensor completion (LRRBTC) problem can be modeled as 
\begin{flalign}
 \min\limits_{\mathcal{X}} \ \text{rank}(\mathcal{X}), \  
 s.t. \quad P_{\Omega}(\mathcal{X})= P_{\Omega}(\mathcal{T}). 
\label{obj1}
\end{flalign}
We first define $\text{rank}(\mathcal{X})$ through  the RBTR decomposition. We will adopt the idea in 
 \cite{yu2019tensor} to  minimize the tensor circular unfolding rank as an alternative to the tensor rank.  

\begin{definition}(Reduced Biquaternion Tensor Circular Unfolding) \\
    Let ${\mathcal{X}}\in{\mathbb{H}_c}^{I_{1}\times I_{2}\times \dots \times I_{N}}$. Its circular unfolding $\mathbf{X}_{<k,d>}$ is defined as:
\begin{equation}
    \mathbf{X}_{<k,d>}(\overline{i_{m}i_{m+1}\dots i_{k}}, \overline{i_{k+1}\dots i_{m-1}}) = \mathcal{X}(i_{1}, i_{2}, \dots, i_{N}), \notag 
    \end{equation}
    where 
\begin{equation}
    m = \begin{cases}
    k-d+1, & \text{if } d \le k; \notag \\
    k-d+1+N, & \text{otherwise}.
    \end{cases}
\end{equation}
The rows of $\mathbf{X}_{<k,d>}$ are enumerated by the $d$ indices $\{i_{m}, i_{m+1}, \dots, i_{k}\}$, while its columns are enumerated by the remaining $N-d$ indices. In addition, $\mbox{fold}_{<k,d>}(\mathbf{X}_{<k,d>}) =  \mathcal{X}$, which represents the inverse process. 
\end{definition}
    
\begin{theorem}
Given an $N$th-order RB tensor  ${\mathcal{X}}\in{\mathbb{H}_c}^{I_{1}\times{I_{2}}\times{...}\times{I_{N}}}$ with a RBTR decomposition format $\mathfrak{TR}(\mathcal{Z}_1,\mathcal{Z}_2,\cdots, \mathcal{Z}_N)$ and its RBTR rank ${\mathbf{r}}=[r_{1},r_{2},\dots,r_{N}]$, we have
\begin{flalign}
\mbox{rank}(\mathbf{X}_{<k,d>}) \leq r_{k+1}r_{m}.\notag
\end{flalign}
\end{theorem}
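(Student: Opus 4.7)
The plan is to reduce the circular unfolding $\mathbf{X}_{<k,d>}$ to an ordinary $d$-mode unfolding of a circularly shifted tensor and then invoke Theorem \ref{tk}.

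First, I would apply Theorem \ref{Th1} with a cyclic shift of $m-1$ steps to obtain $\overleftarrow{\mathcal{X}}^{m-1}\in\mathbb{H}_c^{I_m\times\cdots\times I_N\times I_1\times\cdots\times I_{m-1}}$ with the RBTR format $\mathfrak{TR}(\mathcal{Z}_m,\ldots,\mathcal{Z}_N,\mathcal{Z}_1,\ldots,\mathcal{Z}_{m-1})$. In this reordered format the new rank sequence is $r_m,r_{m+1},\ldots,r_N,r_1,\ldots,r_{m-1},r_m$, so the new ``first'' rank is $r_m$ and the new ``$(d{+}1)$-th'' rank is $r_{m+d}$. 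Since $m+d\equiv k+1\pmod N$ in both branches of the definition of $m$, and the ranks are cyclic with $r_{N+1}=r_1$, this new ``$(d{+}1)$-th'' rank equals $r_{k+1}$.

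Next, I would verify that $\mathbf{X}_{<k,d>}$ is precisely the $d$-mode unfolding $(\overleftarrow{\mathbf{X}}^{m-1})_{\langle d\rangle}$. The circular unfolding groups the $d$ indices $i_m,\ldots,i_k$ as rows and the $N-d$ indices $i_{k+1},\ldots,i_{m-1}$ as columns, and these are exactly the first $d$ and the last $N-d$ modes of the shifted tensor. A routine check that the little-endian encodings $\overline{i_m\cdots i_k}$ and $\overline{i_{k+1}\cdots i_{m-1}}$ coincide with the row and column encodings used in the $d$-mode unfolding completes the identification. Then applying Theorem \ref{tk} to $\overleftarrow{\mathcal{X}}^{m-1}$ gives $\mathbf{X}_{<k,d>}=\mathbf{Z}^{\leq d}_{(2)}(\mathbf{Z}^{>d}_{[2]})^T$ with the shifted subchain tensors, and Lemma \ref{pro22}(f) yields $\rank(\mathbf{X}_{<k,d>})\le r_{k+1}r_m$.

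The main obstacle will be the careful bookkeeping of indices in the wrap-around branch $d>k$ where $m=k-d+1+N$: one has to confirm that after the cyclic shift the leading $d$ modes really are $i_m,\ldots,i_N,i_1,\ldots,i_k$ in that order, that the trailing $N-d$ modes continue as $i_{k+1},\ldots,i_{m-1}$, and that the ``$(d{+}1)$-th'' rank in the shifted sequence is $r_{k+1}$ via the cyclic identification $r_{N+1}=r_1$. Once this combinatorial bookkeeping is settled, the bound is immediate from Theorems \ref{Th1} and \ref{tk} combined with Lemma \ref{pro22}(f).
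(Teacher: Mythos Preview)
Your proposal is correct and follows essentially the same approach as the paper: factor the circular unfolding as a product of two subchain unfoldings so that one factor has $r_m r_{k+1}$ columns, then apply Lemma~\ref{pro22}(f). The only difference is packaging: the paper carries out the trace computation and the subchain factorization directly for $\mathbf{X}_{<k,d>}$, whereas you reduce to the already-proven Theorems~\ref{Th1} and~\ref{tk}, which makes your argument slightly more modular but mathematically identical.
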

\begin{proof}
\begin{flalign}
&\mathbf{X}_{<k,d>}(\overline{i_{m}i_{m+1}\cdots i_{k}}, \overline{i_{k+1}\cdots i_{m-1}}) \notag \\ 
&= \mathcal{X}(i_{1},i_{2},\cdots,i_{N}) \notag \\ 
&= \mathrm{Tr}\{\mathbf{Z}_{1}(i_{1})\mathbf{Z}_{2}(i_{2})\cdots \mathbf{Z}_{N}(i_{N})\}    \notag \\
&= \mathrm{Tr}\{\mathbf{Z}_{m}(i_{m}) \cdots \mathbf{Z}_{k}(i_{k})\mathbf{Z}_{k+1}(i_{k+1}) \cdots \mathbf{Z}_{m-1}(i_{m-1})\}  \notag \\ 
&=\mathrm{Tr}\{\mathbf{A}(\overline{i_{m} i_{m+1} \cdots i_{k}}) \mathbf{B}(\overline{i_{k+1} \cdots i_{m-1}})\}, \notag
\end{flalign}
where \( \mathcal{A} \in \mathbb{H}_c^{r_{m} \times I_m I_{m+1} \cdots I_k \times r_{k+1}}\) with 
\[ \mathbf{A}(\overline{i_m i_{m+1} \cdots i_k}) = \mathbf{Z}_m(i_m) \mathbf{Z}_{m+1}(i_{m+1}) \cdots \mathbf{Z}_k(i_k)\]
and \( \mathcal{B}  \in \mathbb{R}^{r_{k+1} \times I_{k+1}I_{k+2} \ldots I_{m-1} \times r_{m}}\) with 
\[ \mathbf{B}(\overline{i_{k+1} i_{k+2} \cdots i_{m-1}}) = \mathbf{Z}_{k+1}(i_{k+1}) \mathbf{Z}_{k+2}(i_{k+2}) \cdots \mathbf{Z}_{m-1}(i_{m-1}).\]
Then
\begin{flalign}
&\mathbf{X}_{<k,d>}(\overline{i_{m}i_{m+1}\cdots i_{k}}, \overline{i_{k+1}\cdots i_{m-1}}) \notag \\ 
&= \sum_{\gamma_{k+1}=1}^{r_{k+1}} \sum_{\gamma_{m}=1}^{r_{m}} \mathcal{A} (\gamma_{m}, \overline{i_m i_{m+1} \cdots i_k}, \gamma_{k+1})\mathcal{B} (\gamma_{k+1}, \overline{i_{k+1} \cdots i_{m-1}}, \gamma_{m}) \notag \\
&= \sum_{\gamma_{k+1}\gamma_{m}=1}^{r_{k+1} r_{m}} \mathbf{A}_{(2)}(\overline{i_m i_{m+1} \cdots i_k}, \gamma_{k+1} \gamma_{m})  \mathbf{B}_{[2]}^T(\gamma_{k+1} \gamma_{m}, \overline{i_{k+1} \cdots i_{m-1}}). \notag
\end{flalign}
Based on the above  processes, we have 
$
\mathbf{X}_{<k,d>}=\mathbf{A}_{(2)}\mathbf{B}_{[2]}^T$.
Therefore, by using (f) of Theorem \ref{pro22}, 
\begin{align*}
&\text{rank}(\mathbf{X}_{<k,d>}) \leq\min(\text{rank}(\mathbf{A}_{(2)}),\text{rank}(\mathbf{B}_{[2]}^T))\\&=\frac{1}{4}\min(\text{rank}((\mathbf{A}_{(2)})^R),\text{rank}((\mathbf{B}_{[2]}^T)^R), 
\end{align*}
that is, $\text{rank}(\mathbf{X}_{<k,d>}) \leq r_{k+1}r_{m}$ as desired.
\end{proof}

Now, we can transform the LRRBTC problem \eqref{obj1} into the following problem
\begin{equation}
\label{np}
\min\limits_{\mathcal{X}} \quad \sum_{k=1}^{N} \alpha_k \text{rank}(\mathbf{X}_{<k,d>}), \ \
\mbox{s.t.} \ \ \quad P_{\Omega}(\mathcal{X})=P_{\Omega}(\mathcal{T}), 
\end{equation}
where $\alpha_k>0$ is a scalar weight and  $\sum_{k=1}^{N} \alpha_k = 1$. 

%\subsection{Reduced Biquaternion Tensor Ring Nuclear Norm}
However, the optimization problem as formulated in \eqref{np} is NP-hard  \cite{hillar2013most}. To address this challenge, the researchers recently have turned to alternative strategies. Among these, minimizing the nuclear norm has emerged as a popular choice. The nuclear norm offers a convex relaxation for the non-convex rank minimization problem, providing an effective approximation \cite{jia2019robust}. Therefore we propose the reduced biquaternion tensor ring nuclear norm.

\begin{definition}(RBTR Nuclear Norm)\\
Let ${\mathcal{X}}\in{\mathbb{H}_c}^{I_{1}\times{I_{2}}\times{...}\times{I_{N}}}$ with a RBTR format. We define its RBTR nuclear norm as:
\begin{flalign}
&&
\mbox{NN}_{{RBTR}} (\mathcal{X}) =\sum_{k=1}^{N} \alpha_{k} \lVert \mathbf{X}_{<k,d>} \rVert_{*},
&&
\end{flalign}
where $\alpha_k$'s  are the same as ones in \eqref{np}.
\end{definition}

%\subsection{TV Regularization}
In many representative tensor completion methods currently in use, a technique known as key augmentation has been widely adopted. This approach primarily aims to better mine the data's low-rank structures, as it enhances the representational capability and flexibility towards the original tensor \cite{bengua2017efficient}. However, a notable downside is that employing tensor augmentation can introduce block artifacts, manifested as visible block-like distortions in the data.

To overcome this challenge and improve the visual quality of the data, the researchers have proposed the incorporation of total variation (TV) regularization \cite{ding2019low}. TV regularization is a mathematical technique designed to smooth the values within an image or tensor, while preserving its primary structures and features. The underlying idea is to penalize large local variations within the tensor, promoting data continuity and smoothness. As a result, TV regularization can effectively reduce block artifacts caused by tensor augmentation, thus enhancing the performance and quality of the completion task. The isotropic TV is defined by
\begin{flalign}
    \mbox{TV}(\mathcal{X})=\sum_{m=1}^{I_{1}} \sum_{n=1}^{t} \sqrt {|D^{1}_{m,n} \mathbf{X}_{(1)}|^2 + |D^{2}_{m,n} \mathbf{X}_{(1)}|^2},  
\end{flalign}
where $\mathbf{X}_{(1)} \in  {\mathbb{H}_c}^{I_1 \times t}, \ t={\prod_{i=2}^{N}I_{i}}$. At the $(m, n)$-th pixel within the matrix $\mathbf{X}_{(1)}$, $D^{1}_{m,n} \mathbf{X}_{(1)}$ represents the gradient in the horizontal axis and  $D^{2}_{m,n} \mathbf{X}_{(1)}$ represents the gradient in the vertical axis. Correspondingly, the operators $D^{1}_{m,n}$ and $D^{2}_{m,n}$ denote the discrete gradients in the horizontal and vertical directions.

%\subsection{The Proposed Algorithm}
By utilizing RBTR nuclear norm and TV regularization, our RBTR-TV method is proposed as:
\begin{flalign}
\min\limits_{\mathcal{X}}  \quad &\sum_{k=1}^{N} \alpha_{k} \lVert \mathbf{X}_{<k,d>} \rVert_{*}+\lambda \mbox{TV}(\mathcal{X}) \notag\\[3mm]
s.t. \quad &P_{\Omega}(\mathcal{X})=P_{\Omega}(\mathcal{T}), 
\label{obj2}
\end{flalign}
where $\lambda$ is the regularization parameter. 

To  solve the \eqref{obj2}, we first introduce auxiliary reduced biquaternion variables $\mathcal{A}_k (k = 1,2, \dots, N)$ and $\mathcal{Z}$  by using the variable-splitting technique as follows:
\begin{flalign}
\min\limits_{\mathcal{X},{\mathcal{A}_k},\mathcal{Z}}  \quad &\sum_{k=1}^{N} \alpha_{k} \lVert \mathbf{A}_{k_{<k,d>}} \rVert_{*}+\lambda \sum_{m=1}^{I_{1}} \sum_{n=1}^{t}  \lVert \mathbf{E}_{m,n} \rVert_{2} \notag\\[3mm]
s.t. \quad &P_{\Omega}(\mathcal{X})=P_{\Omega}(\mathcal{T}), \notag \\
& \mathcal{X}=\mathcal{A}_k,\mathcal{X}=\mathcal{Z},  \notag \\
& \mathbf{D}_1\mathbf{Z}_{(1)}=\mathbf{E}_{1},\mathbf{D}_2\mathbf{Z}_{(1)}=\mathbf{E}_{2}, \notag
\end{flalign}
where $\mathbf{Z}_{(1)}$ is the classical mode-1 unfolding of $\mathcal{Z}$,  $\mathbf{E}_{m,n}=[(\mathbf{E}_{1})_{m,n},(\mathbf{E}_{2})_{m,n}]$ and $\mathbf{E}_{1},\mathbf{E}_{2}$ denote the outcomes of applying horizontal and vertical discrete gradient matrices $\mathbf{D}_1$ and $\mathbf{D}_2$ to $\textbf{Z}_{(1)}$, respectively.

The augmented Lagrangian function is defined using the ADMM framework as follows: 
\begin{flalign}
    &\mathcal{L}(\mathcal{X},\{\mathcal{A}_k \}_{k=1}^{N},\mathcal{Z}, \{ \mathbf{E}_{i} \}_{i=1}^{2},\{\mathcal{B}_k \}_{k=1}^{N},{\mathcal{Q}},\{ \mathbf{F}_{i} \}_{i=1}^{2})  \notag \\
    &=  \sum_{k=1}^{N} \alpha_{k} \lVert \mathbf{A}_{k_{<k,d>}} \rVert_{*}+\lambda \sum_{m=1}^{I_{1}} \sum_{n=1}^{t}  \lVert \mathbf{E}_{m,n} \rVert_{2}  \notag \\ 
    & +\sum_{k=1}^{N}(\frac{\beta_{1}}{2}\lVert  {\mathcal{X}} - {\mathcal{A}_k} \rVert^{2}_{F} + 
    \Re (<{\mathcal{X}} - {\mathcal{A}_k},{\mathcal{B}_k}>))  \notag \\
    & +\frac{\beta_{2}}{2}\lVert  {\mathcal{X}} - {\mathcal{Z}} \rVert^{2}_{F} +
    \Re ( <{\mathcal{X}} - {\mathcal{Z}},{\mathcal{Q}}>)   \notag \\
    & +\sum_{i=1}^{2}(\frac{\beta_{3}}{2}\lVert \mathbf{D}_i\mathbf{Z}_{(1)} -  \mathbf{E}_{i} \rVert^{2}_{F} + \Re (<\mathbf{D}_i\mathbf{Z}_{(1)} -  \mathbf{E}_{i}, \mathbf{F}_{i}>)), \notag
    \end{flalign}
where $\mathcal{B}_{k} (k = 1,2, \dots, N)$, ${\mathcal{Q}}$ and $\textbf{F}_{i}(i=1,2)$ are Lagrange multipliers, $\beta_{i}> 0(i=1,2,3)$ is the penalty parameter, and $p$ represents the number of iterations. Next, we solve the problem using the iterative approach described below:
\begin{align}
    \left \{
    \begin{aligned}
    \mathcal{X}^{p+1} &=\mathop{\mathrm{argmin}}\limits_{P_{\Omega}(\mathcal{X}) = P_{\Omega}(\mathcal{T})} \mathcal{L}(\mathcal{X}, \{\mathcal{A}_k^{p} \}_{k=1}^{N}, \mathcal{Z}^{p}, \{\mathcal{B}_{k}^{p} \}_{k=1}^{N}, \mathcal{Q}^{p}) \\
   \mathcal{A}_k^{p+1} &=\mathop{\mathrm{argmin}}\limits_{\mathcal{A}_k} \mathcal{L} (\mathcal{X}^{p+1},\mathcal{A}_k,\mathcal{B}_{k}^{p}) \\
    \mathcal{Z}^{p+1}  &=\mathop{\mathrm{argmin}}\limits_{\mathcal{Z}} \mathcal{L}(\mathcal{X}^{p+1} ,\mathcal{Z},\{ \textbf{E}_{i}^p \}_{i=1}^{2},\mathcal{Q}^{p},\{ \textbf{F}_{i}^p \}_{i=1}^{2}) \\
    \mathbf{E}_{i}^{p+1} &=\mathop{\mathrm{argmin}}\limits_{\mathbf{E}_{i}} \mathcal{L}(\mathcal{Z}^{p+1}, \mathbf{E}_{i}, \mathbf{F}_{i}^{p}) \\
   \mathcal{B}_{k}^{p+1} &=\mathcal{B}_{k}^{p} + \beta_{1}(\mathcal{X}^{p+1} -\mathcal{A}_{k}^{p+1})\\
    \mathcal{Q}^{p+1} &= \mathcal{Q}^{p} + \beta_{2}(\mathcal{X}^{p+1} - \mathcal{Z}^{p+1}) \\
    \mathbf{F}_{i}^{p+1} &= \mathbf{F}_{i}^{p} + \beta_{3}(\mathbf{D}_i\mathbf{Z}_{(1)}^{p+1} - \mathbf{E}_{i}^{p+1}).
    \end{aligned}
    \right. \notag
\end{align}

\noindent $\textbf{Updating } \mathcal{X}: $
\begin{flalign*}
\mathcal{X}^{p+1} &= \mathop{\mathrm{argmin}}\limits_{P_{\Omega}(\mathcal{X})=P_{\Omega}(\mathcal{T})} \mathcal{L}(\mathcal{X},\{\mathcal{A}_k^{p} \}_{k=1}^{N},\mathcal{Z}^{p},\{\mathcal{B}_{k}^{p} \}_{k=1}^{N},\mathcal{Q}^{p})   \\
    \quad 
    &= \mathop{\mathrm{argmin}}\limits_{P_{\Omega}(\mathcal{X})=P_{\Omega}(\mathcal{T})} \sum_{k=1}^{N}\left(\frac{\beta_{1}}{2}\lVert  {\mathcal{X}} - {{\mathcal{A}_k^{p}}} \rVert^{2}_{F} + \Re \left(<{\mathcal{X}} - {{\mathcal{A}_k^{p}}},{\mathcal{B}_k^{p}}>\right)\right)  \\
    &\quad + \frac{\beta_{2}}{2}\lVert  {\mathcal{X}} - {\mathcal{Z}}^{p} \rVert^{2}_{F} + \Re \left(<{\mathcal{X}} - {\mathcal{Z}}^{p},{\mathcal{Q}}^{p}>\right).
\end{flalign*} 
For this optimization problem, adding a constant unrelated to ${\mathcal{X}}$ does not affect the optimization outcome. Therefore, we transform the problem into: 
\begin{flalign}
\mathcal{X}^{p+1} 
    &= \mathop{\mathrm{argmin}}\limits_{P_{\Omega}(\mathcal{X})=P_{\Omega}(\mathcal{T})} \sum_{k=1}^{N}\left(\frac{\beta_{1}}{2}\left(\lVert  {\mathcal{X}} - {{\mathcal{A}_k^{p}}} \rVert^{2}_{F} + 2\Re \left(<{\mathcal{X}} - {{\mathcal{A}_k^{p}}},\frac{\mathcal{B}_k^{p}}{\beta_1}>\right) + \lVert \frac{\mathcal{B}_k^{p}}{\beta_1} \rVert_F^2\right) \right)  \notag  \\[3mm]
    &\quad + \frac{\beta_{2}}{2} \left( \lVert  {\mathcal{X}} - {\mathcal{Z}}^{p} \rVert^{2}_{F} + 2 \Re \left(<{\mathcal{X}} - {\mathcal{Z}}^{p},\frac{{\mathcal{Q}}^{p}}{\beta_2}>\right) + \frac{{\mathcal{Q}}^{p}}{\beta_2}\right)    \notag \\[3mm]
    &= \mathop{\mathrm{argmin}}\limits_{P_{\Omega}(\mathcal{X})=P_{\Omega}(\mathcal{T})} \sum_{k=1}^{N}\left(\frac{\beta_{1}}{2}\lVert  {\mathcal{X}} - {{\mathcal{A}_k^{p}}} + \frac{ {\mathcal{B}_k^{p}}}{\beta_{1}} \rVert^{2}_{F}\right) + \frac{\beta_{2}}{2}\lVert  {\mathcal{X}} - {\mathcal{Z}^{p}} + \frac{{\mathcal{Q}}^{p}}{\beta_{2}}\rVert^{2}_{F}   \notag \\[3mm]
    \quad &= P_{{\Omega}^{c}}\left(\frac{\sum_{k=1}^{N}( \beta_{1} {{\mathcal{A}_k^{p}}} - {\mathcal{B}_k^{p}} ) + \beta_{2} {\mathcal{Z}^{p}} -{\mathcal{Q}^{p}} }{N \beta_{1}+\beta_{2}}\right) + P_{\Omega} (\mathcal{T}),\label{X}
\end{flalign}
where  $\Omega^c$ represents the complement of $\Omega$, which contains all the elements that are not in $ \Omega$. 

\noindent $\textbf{Updating } \mathcal{A}_k: $
\begin{flalign}
\mathcal{A}_k^{p+1} 
&= \mathop{\mathrm{argmin}}\limits_{\mathcal{A}_k} \mathcal{L} (\mathcal{X}^{p+1},\mathcal{A}_k,\mathcal{B}_{k}^{p}) \notag \\
&=\mathop{\mathrm{argmin}}\alpha_{k}   \lVert \mathbf{A}_{k_{<k,d>}} \rVert_{*} +\frac{\beta_{1}}{2}\lVert  {\mathcal{X}}^{p+1} - {\mathcal{A}_k} \rVert^{2}_{F}  + \Re ( <{\mathcal{X}}^{p+1} - {\mathcal{A}_k},\mathcal{B}_{k}^{p}>) \notag \\
&= \mathop{\mathrm{argmin}} \alpha_{k}  \lVert \mathbf{A}_{k_{<k,d>}} \rVert_{*} +\frac{\beta_{1}}{2}\lVert  {\mathcal{X}}^{p+1} - {\mathcal{A}_k} + \frac{\mathcal{B}_k^{p}}{\beta_{1}}  \rVert^{2}_{F} \notag \\
&= \mathop{\mathrm{argmin}} \alpha_{k}  \lVert \mathbf{A}_{k_{<k,d>}} \rVert_{*} +\frac{\beta_{1}}{2}\lVert  {\mathbf{X}}^{p+1}_{<k,d>} - \mathbf{A}_{k_{<k,d>}} + \frac{{\mathbf{B}_k^p}_{<k,d>}}{\beta_{1}}  \rVert^{2}_{F}. \notag
\end{flalign}
 
Denote $\bm{\Gamma}= {\mathbf{X}}^{p+1}_{<k,d>} + \frac{{\mathbf{B}_k^p}_{<k,d>}}{\beta_{1}}$  and let $\bm{\Gamma} = \textbf{U} \boldsymbol{\Sigma} \textbf{V}^H ,\tau=\frac{\alpha_{k}}{\beta_{1}}$. Thus  $\mathcal{A}_k^{p+1}$ has the closed-form solution,  which follows a proof process analogous to that in the quaternion domain \cite{chen2019low}, and it can be presented as: 
\begin{flalign}
\mathcal{A}_k^{p+1} = fold_{<k,d>}(\textbf{U} S_{\tau}(\boldsymbol{\Sigma})\textbf{V}^H ),
\label{A}
\end{flalign}
where $S_{\tau}(\boldsymbol{\Sigma})= \mbox{diag}(\max(0,{\sigma_i(\bm{\Gamma})-\tau}))$.

%更新Z
\noindent $\textbf{Updating $\mathcal{Z}$}: $
\begin{flalign}
\mathcal{Z}^{p+1} 
&= \mathop{\mathrm{argmin}} \limits_{\mathcal{Z}} \mathcal{L}(\mathcal{X}^{p+1} ,\mathcal{Z},\{ \textbf{E}_{i}^p \}_{i=1}^{2},\mathcal{Q}^{p},\{ \textbf{F}_{i}^p \}_{i=1}^{2}) \notag \\
&= \mathop{\mathrm{argmin}}  \frac{\beta_{2}}{2}\lVert  {\mathcal{X}}^{p+1} - {\mathcal{Z}} \rVert^{2}_{F} +  \Re (<{\mathcal{X}}^{p+1} - {\mathcal{Z}},{\mathcal{Q}}^{p} >)  \notag \\
& +\sum_{i=1}^{2}(\frac{\beta_{3}}{2}\lVert \mathbf{D}_i\textbf{Z}_{(1)} -  \textbf{E}_{i}^p \rVert^{2}_{F} + \Re (<\mathbf{D}_i\textbf{Z}_{(1)} -  \textbf{E}_{i}^p, \textbf{F}_{i}^p>)) \notag \\
&= \mathop{\mathrm{argmin}}  \frac{\beta_{2}}{2}\lVert  {\mathcal{X}}^{p+1} - {\mathcal{Z}} + \frac{{\mathcal{Q}}^{p}}{\beta_{2}} \rVert^{2}_{F} + \sum_{i=1}^{2}\left(\frac{\beta_{3}}{2}\lVert \mathbf{D}_i\textbf{Z}_{(1)} -  \textbf{E}_{i}^p + \frac{\textbf{F}_{i}^p}{\beta_{3}} \rVert^{2}_{F}\right).\notag 
\end{flalign}

Since the tensor's Frobenius norm and the tensor's unfolded matrix's Frobenius norm are the same, the aforementioned problem is transformed into the following formula:
\begin{flalign}
\mathbf{Z}_{(1)}^{p+1}  = \mathop{\mathrm{argmin}}  \frac{\beta_{2}}{2}\lVert  {\mathbf{X}}_{(1)}^{p+1} - {\mathbf{Z}}_{(1)} + \frac{{\mathbf{Q}}_{(1)}^{p}}{\beta_{2}} \rVert^{2}_{F} +\sum_{i=1}^{2}\left(\frac{\beta_{3}}{2}\lVert \mathbf{D}_i\mathbf{Z}_{(1)} - \mathbf{E}_{i}^{p} + \frac{{\mathbf{F}_i}^{p}}{\beta_{3}} \rVert^{2}_{F}\right). \notag
\end{flalign}
The problem can be equivalent to the following formula:
\begin{flalign}
\mathbf{A} \mathbf{Z}_{(1)}^{p+1} = \mathbf{B}, \notag
\end{flalign}
where 
\[
\mathbf{A}=\beta_{2}\mathbf{I} +\beta_{3}(\mathbf{D}_1^T\mathbf{D}_1+\mathbf{D}_2^T\mathbf{D}_2), \ \mathbf{B}= \beta_2 {\mathbf{X}}_{(1)}^{p+1} + {\mathbf{Q}}_{(1)}^{p} + \beta_3 \mathbf{D}_1^T \mathbf{E}_{1}^{p}  - \mathbf{D}_1^T \mathbf{F}_{1}^{p} + \beta_3 \mathbf{D}_2^T \mathbf{E}_{2}^{p}  - \mathbf{D}_2^T \mathbf{F}_{2}^{p}.
\]
As demonstrated in  \cite{pei2004commutative}, while the spatial domain convolution of two quaternion matrices cannot be computed through the multiplication of their Fourier transforms in the frequency domain, the convolution operation for RB matrices in the spatial domain does equate to their multiplication in the frequency domain, which is similar to the properties of convolution for real-valued matrices. Therefore the problem can be solved quickly by using the Fourier transform \cite{ding2019low}:
\begin{flalign}
\mathbf{Z}_{(1)}^{p+1} = {\mathcal{F}}^{-1} \left(\frac{{\mathcal{F}}(\mathbf{\textbf{B}})}{{\mathcal{F}}(\textbf{A})}\right).
\label{Z}
\end{flalign}

\noindent $\textbf{Updating $\mathbf{E}_{i}$}: $
\begin{flalign}
\mathbf{E}_{i}^{p+1}
&= \mathop{\mathrm{argmin}}\limits_{\mathbf{E}_{i}} \mathcal{L}(\mathcal{Z}^{p+1},\mathbf{E}_{i},\mathbf{F}_{i}^{p}) \notag \\
&= \mathop{\mathrm{argmin}}\limits_{\mathbf{E}_{i}}  \lambda \sum_{m=1}^{I_{1}} \sum_{n=1}^{t}  \lVert \mathbf{E}_{m,n} \rVert_{2} +\sum_{i=1}^{2}(\frac{\beta_{3}}{2}\lVert \mathbf{D}_i\mathbf{Z}_{(1)}^{p+1} -  \mathbf{E}_{i} \rVert^{2}_{F} + \Re (<\mathbf{D}_i\mathbf{Z}_{(1)}^{p+1} -  \mathbf{E}_{i}, \mathbf{F}_{i}^p>)) \notag \\
&=  \mathop{\mathrm{argmin}} \limits_{\mathbf{E}_{i}}  \lambda \sum_{m=1}^{I_{1}} \sum_{n=1}^{t}  \lVert \mathbf{E}_{m,n} \rVert_{2} + \sum_{i=1}^{2}\left(\frac{\beta_{3}}{2}\lVert \mathbf{D}_i\mathbf{Z}_{(1)}^{p+1} - \mathbf{E}_{i} + \frac{{\mathbf{F}_i^{p}}}{\beta_{3}} \rVert^{2}_{F}\right). \notag
\end{flalign}
In order to find the optimal $\mathbf{E}_{i}$, one must solve $I_t$ independent minimization problems as follows due to the objective function being the sum of squared terms of each element in $\mathbf{E}_{i}$, which are independent of one another:
\begin{flalign*}
\mathop{\mathrm{argmin}} \limits_{({\mathbf{E}_{1}},{\mathbf{E}_{2}})_{m,n}}  \lambda \sqrt{|(\mathbf{E}_{1})_{m,n}|^2+|(\mathbf{E}_{2})_{m,n}|^2} + \frac{\beta_{3}}{2}\sum_{i=1}^{2} \left| (\mathbf{D}_i\mathbf{Z}_{(1)}^{p+1})_{m,n} - (\mathbf{E}_{i})_{m,n} + \frac{({\mathbf{F}_i^{p}})_{m,n}}{\beta_{3}} \right|^{2}. 
\end{flalign*}
As proven in Theorem \ref{min2norm}, we have:
\begin{flalign}
[(\mathbf{E}_{1})_{m,n},(\mathbf{E}_{2})_{m,n}] = \max \left\{ \lVert \mathbf{w}_{m,n} \rVert_{2} - \frac{\lambda}{\beta_{3}} ,\ 0\right\} \frac{\mathbf{w}_{m,n}}{\lVert \mathbf{w}_{m,n} \rVert_{2}}, \label{E}
\end{flalign}
where 
$$\mathbf{w}_{m,n} = \left[(\mathbf{D}_1\mathbf{Z}_{(1)}^{p+1})_{m,n} + \frac{({\mathbf{F}_1^{p}})_{m,n}}{\beta_{3}}, \ (\mathbf{D}_2\mathbf{Z}_{(1)}^{p+1})_{m,n} + \frac{({\mathbf{F}_2^{p}})_{m,n}}{\beta_{3}}\right] \ ({1 \le m } \le I_{1} ,{1 \le n } \le t). $$

%更新BQF
\noindent $\textbf{Updating $\mathcal{B}_{k},\mathcal{Q},\textbf{F}_{i}$}: $
\begin{flalign}
\mathcal{B}_{k}^{p+1} &=\mathcal{B}_{k}^{p} + \beta_{1}(\mathcal{X}^{p+1}-\mathcal{A}_{k}^{p+1}), \label{B} \\
\mathcal{Q}^{p+1} &= \mathcal{Q}^{p} + \beta_{2}(\mathcal{X}^{p+1}-\mathcal{Z}^{p+1}), \label{Q} \\
\mathbf{F}_{i}^{p+1} &= \mathbf{F}_{i}^{p} + \beta_{3}(\mathbf{D}_i\mathbf{Z}_{(1)}^{p+1} - \mathbf{E}_{i}^{p+1}). \label{F}
\end{flalign}

Ultimately, we design the following $\textbf{Algorithm 2}$ which is a summary of the low-rank RB tensor completion approach, leveraging the RB tensor ring decomposition and the total variation regularization.
\begin{figure}[ht]
    \centering  % Centers the algorithm in the figure
    \begin{algorithm}[H]
        \caption{RBTR-TV}
        \begin{algorithmic}[1]
            \Require 
                The observed RB tensor $\mathcal{T} \in {\mathbb{H}_c}^{I_{1}\times I_{2}\times \dots \times I_{N}}$, 
                the index set ${\Omega}$,
                the parameters: $\alpha_k$ for $k=1,2,...,N$, the regularization parameters: $\lambda, \beta_1, \beta_2, \beta_3$, and the maximum number of iterations: $maxIter$.
        \Ensure The recovered RB tensor $\mathcal{X}^{p}$.
        \vspace{3mm}

            \State Initialize: $\{\mathcal{A}_k^{0}  \}_{k=1}^{N} , \{\mathcal{B}_k^{0}  \}_{k=1}^{N} , \mathcal{Z}^{0} , \{\mathbf{E}_{i}^{0} \}_{i=1}^{2}, \mathcal{Q}^0, \{ \mathbf{F}_{i}^0 \}_{i=1}^{2}, p=0$\vspace{.2cm}
            % Main Loop
            \While { $\left( \frac{ \lVert \mathcal{X}^{p+1}-\mathcal{X}^{p} \rVert_{F}  }{ \lVert \mathcal{X}^{p} \rVert_{F}} > 10^{-5} \right)$ or $(p < maxIter)$ } \vspace{.2cm}
                
                \State Update $\mathcal{X}^{p+1}$ by using \eqref{X}
                
                \For{$k=1$ to $N$}
                    \State Update $\mathcal{A}_k^{p+1}$ by using \eqref{A}
                \EndFor
                
                \State Update $\mathcal{Z}^{p+1}$ by using \eqref{Z}
                \State Update $\textbf{E}_i^{p+1}$ by using \eqref{E}
                 
                \For{$k=1$ to $N$}
                    \State Update $\mathcal{B}_k^{p+1}$ by using \eqref{B}
                \EndFor
                
                \State Update $\mathcal{Q}^{p+1}$ by using \eqref{Q}
                \State Update $\mathbf{F}_i^{p+1}$ by using \eqref{F}
               
                \State Increment $p$: $p \leftarrow p+1$
            \EndWhile

        \end{algorithmic}
    \end{algorithm}
    \label{alg:RBTR-TV}
\end{figure}

\section{Experiment}
\subsection{Color image completion}
In this section, we compare our proposed RBTR-TV with six baselines in color image completion experiments, including SiLRTC-TT\cite{bengua2017efficient}, LRQC\cite{chen2023quaternion},  MF-TTTV\cite{ding2019low}, TVTRC\cite{he2019total}, RTRC\cite{huang2020robust},   and QTT-SRTD\cite{miao2024quaternion}. All methods tune parameters according to literature references.

In order to quantitatively compare the completion performance, we use PSNR (see \eqref{PSNR}), RSE (see \eqref{RSE}) and Time (in seconds) as metrics. Ten color images, each represented as a $256 \times 256 \times 3$ real tensor, are used to compare RBTR-TV with other methods. In addition, we use ket augmentation technology and change each image into a $4\times 4 \times 4 \times 4 \times 4 \times 4 \times 4 \times 4 \times 3$ high-order real tensor. After converting to RBs representation, the dimension is $4\times 4 \times 4 \times 4 \times 4 \times 4 \times 4 \times 4$. We set $\alpha_k = 0.125, \lambda = 0.3, \beta_1 = 5 \times 10^{-3}, \beta_2 = 0.1, \text{ and } \beta_3 = 5 \times 10^{-3}$.

Table \ref{image_result} shows the performance comparison results at sampling rates SR={5\%, 10\%, 15\%, 20\%}. The method RBTR-TV consistently outperforms other comparative methods both in PSNR and RSE across all sampling rates. These results indicate the robustness and effectiveness of RBTR-TV in handling various levels of data sparsity. Figure \ref{image_result_table} is the visual content based on $SR=20\%$.

% 开始表格
\begin{table}
\centering
\scalebox{0.95}{
\caption{PSNR, RSE and Time of various methods with four sampling rates (bold indicates best performance)}\label{image_result}
\renewcommand{\arraystretch}{1.1}
\resizebox{\textwidth}{!}{%
\begin{tabular}{cccccccccccccc}
\hline
Image & SR & \multicolumn{3}{c}{5\%} & \multicolumn{3}{c}{10\%} & \multicolumn{3}{c}{15\%} & \multicolumn{3}{c}{20\%} \\ \cline{3-5} \cline{6-8} \cline{9-11} \cline{12-14}
& Method & PSNR & RSE & Time & PSNR & RSE & Time & PSNR & RSE & Time & PSNR & RSE & Time \\ \hline
& SiLRTC-TT & 18.28 & 0.1697 & 10.63 & 19.79 & 0.1437 & 15.29 & 20.97 & 0.1260 & 13.71 & 22.03 & 0.1120 & 11.31 \\
& RTRC & 16.33 & 0.2145 & 29.61 & 20.35 & 0.1369 & 30.57 & 21.92 & 0.1139 & 24.58 & 23.73 & 0.0933 & 28.48 \\
& MF-TTTV & 20.06 & 0.1350 & 362.91 & 22.67 & 0.1000 & 345.75 & 24.25 & 0.0834 & 253.74 & 25.44 & 0.0727 & 189.10 \\
& TVTRC & 10.98 & 0.3841 & 36.71 & 12.73 & 0.3139 & 37.06 & 16.65 & 0.2001 & 38.29 & 20.01 & 0.1358 & 27.20 \\
& LRQC & 16.84 & 0.1957 & \textbf{7.55} & 21.05 & 0.1206 & \textbf{7.53} & 22.70 & 0.0996 & \textbf{5.54} & 23.94 & 0.0864 & \textbf{4.61} \\
& QTT-SRTD & 19.82 & 0.1355 & 46.06 & 21.50 & 0.1106 & 44.19 & 23.07 & 0.0948 & 45.14 & 24.26 & 0.0791 & 41.29 \\
\multirow{-7}{*}{Airplane} & RBTR-TV & \textbf{22.05} & \textbf{0.1113} & 12.91 & \textbf{24.04} & \textbf{0.0900} & 11.43 & \textbf{25.40} & \textbf{0.0756} & 11.80 & \textbf{26.78} & \textbf{0.0645} & 11.17 \\ \hline
& SiLRTC-TT & 15.44 & 0.3408 & 15.21 & 18.97 & 0.2249 & 15.39 & 20.57 & 0.1868 & 14.06 & 21.81 & 0.1623 & 11.39 \\
& RTRC & 12.88 & 0.4677 & 29.28 & 18.39 & 0.2478 & 28.19 & 21.42 & 0.1716 & 29.33 & 22.43 & 0.1523 & 29.89 \\
& MF-TTTV & 18.63 & 0.2309 & 347.56 & 22.86 & 0.1419 & 368.07 & 24.77 & 0.1139 & 347.05 & 26.17 & 0.0969 & 286.79 \\
& TVTRC & 9.34 & 0.6733 & 36.34 & 14.30 & 0.3804 & 37.00 & 17.66 & 0.2582 & 34.80 & 18.77 & 0.2273 & 24.75 \\
& LRQC & 15.96 & 0.3140 & \textbf{6.35} & 19.51 & 0.2087 & \textbf{6.68} & 22.60 & 0.1687 & \textbf{5.23} & 24.04 & 0.1422 & \textbf{5.21} \\
& QTT-SRTD & 20.79 & 0.1748 & 54.38 & 22.97 & 0.1341 & 53.10 & 24.74 & 0.1148 & 57.66 & 25.83 & 0.0991 & 50.92 \\
\multirow{-7}{*}{Peppers} & RBTR-TV & \textbf{22.91} & \textbf{0.1440} & 11.50 & \textbf{25.16} & \textbf{0.1089} & 11.40 & \textbf{26.88} & \textbf{0.0905} & 11.45 & \textbf{28.30} & \textbf{0.0774} & 11.58 \\ \hline
& SiLRTC-TT & 15.71 & 0.3045 & 16.49 & 17.58 & 0.2456 & 16.04 & 18.66 & 0.2163 & 15.10 & 19.39 & 0.1989 & 12.50 \\
& RTRC & 13.02 & 0.4153 & 27.62 & 17.80 & 0.2589 & 28.21 & 19.60 & 0.1980 & 25.75 & 20.43 & 0.1852 & 33.09 \\
& MF-TTTV & 16.36 & 0.2809 & 408.78 & 18.71 & 0.2141 & 349.43 & 19.86 & 0.1876 & 361.14 & 20.72 & 0.1700 & 259.97 \\
& TVTRC & 10.84 & 0.5298 & 36.44 & 13.40 & 0.3946 & 36.37 & 16.24 & 0.2846 & 38.19 & 18.25 & 0.2258 & 41.30 \\
& LRQC & 16.21 & 0.2857 & \textbf{6.77} & 17.97 & 0.2332 & \textbf{5.88} & 19.10 & 0.2048 & \textbf{5.01} & 19.99 & 0.1849 & \textbf{4.35} \\
& QTT-SRTD & 18.42 & 0.2178 & 57.46 & 19.65 & 0.1878 & 56.35 & 20.90 & 0.1647 & 61.12 & 21.73 & 0.1260 & 54.67 \\
\multirow{-7}{*}{Baboon} & RBTR-TV & \textbf{19.97} & \textbf{0.1868} & 12.42 & \textbf{21.33} & \textbf{0.1596} & 13.49 & \textbf{22.39} & \textbf{0.1415} & 12.03 & \textbf{23.44} & \textbf{0.1253} & 13.13 \\ \hline
& SiLRTC-TT & 19.43 & 0.3574 & 15.78 & 22.05 & 0.2617 & 14.69 & 23.63 & 0.2172 & 11.29 & 24.96 & 0.1855 & 9.86 \\
& RTRC & 16.79 & 0.4842 & 19.13 & 23.25 & 0.2284 & 19.09 & 25.33 & 0.1789 & 19.74 & 26.35 & 0.1586 & 19.21 \\
& MF-TTTV & 21.80 & 0.2853 & 350.61 & 26.50 & 0.1661 & 345.74 & 28.27 & 0.1355 & 266.60 & 29.86 & 0.1129 & 244.84 \\
& TVTRC & 13.02 & 0.7846 & 12.38 & 18.24 & 0.4301 & 40.62 & 20.76 & 0.3215 & 37.36 & 22.95 & 0.2502 & 22.32 \\
& LRQC & 15.90 & 0.5627 & \textbf{6.38} & 22.54 & 0.2620 & \textbf{7.06} & 25.66 & 0.1830 & \textbf{6.09} & 27.48 & 0.1485 & \textbf{5.50} \\
& QTT-SRTD & 24.22 & 0.1917 & 71.04 & 26.17 & 0.1509 & 70.10 & 27.61 & 0.1267 & 75.67 & 28.75 & 0.1104 & 75.32 \\
\multirow{-7}{*}{Female} & RBTR-TV & \textbf{26.35} & \textbf{0.1691} & 12.27 & \textbf{28.67} & \textbf{0.1294} & 11.43 & \textbf{30.31} & \textbf{0.1071} & 11.51 & \textbf{31.69} & \textbf{0.0915} & 11.57 \\ \hline

& SiLRTC-TT & 16.70 & 0.2120 & 14.26 & 18.83 & 0.1656 & \textbf{15.35} & 20.33 & 0.1393 & 14.25 & 21.60 & 0.1203 & 11.32 \\
& RTRC & 14.36 & 0.2785 & 20.06 & 18.04 & 0.1921 & 29.13 & 20.46 & 0.1396 & 26.10 & 22.48 & 0.1118 & 27.28 \\
& MF-TTTV & 19.92 & 0.1566 & 344.68 & 23.07 & 0.1090 & 344.32 & 24.65 & 0.0909 & 291.64 & 26.03 & 0.0775 & 208.95 \\
& TVTRC & 11.13 & 0.4311 & 39.24 & 16.78 & 0.2249 & 36.47 & 18.12 & 0.1927 & 40.02 & 20.55 & 0.1457 & 23.73 \\
& LRQC & 17.65 & 0.2034 & \textbf{6.59} & 21.46 & 0.1311 & \textbf{7.57} & 23.01 & 0.1097 & \textbf{5.65} & 24.32 & 0.0943 & \textbf{4.45} \\
& QTT-SRTD & 20.53 & 0.1418 & 47.91 & 22.52 & 0.1114 & 47.89 & 23.49 & 0.0932 & 47.89 & 25.31 & 0.0795 & 46.63 \\
\multirow{-7}{*}{House1} & RBTR-TV & \textbf{21.95} & \textbf{0.1240} & 11.42 & \textbf{23.98} & \textbf{0.0981} & 11.44 & \textbf{25.54} & \textbf{0.0820} & 11.36 & \textbf{26.86} & \textbf{0.0705} & 11.22 \\ \hline

& SiLRTC-TT & 16.34 & 0.2671 & 16.51 & 18.69 & \textbf{0.2053} & \textbf{15.41} & 20.14 & 0.1750 & 14.24 & 21.33 & 0.1528 & 11.52 \\
& RTRC & 13.44 & 0.3790 & 29.80 & 19.59 & 0.1892 & 21.83 & 21.14 & 0.1562 & 29.22 & 22.65 & 0.1322 & 27.37 \\
& MF-TTTV & 18.16 & 0.2243 & 344.49 & 22.61 & 0.1344 & 383.65 & 24.24 & 0.1114 & 344.23 & 25.85 & 0.0925 & 296.94 \\
& TVTRC & 10.80 & 0.5234 & 36.21 & 13.94 & 0.3649 & 37.16 & 18.63 & 0.2127 & 40.96 & 19.80 & 0.1857 & 16.23 \\
& LRQC & 16.59 & 0.2687 & \textbf{6.55} & 20.19 & 0.1776 & \textbf{5.76} & 22.27 & 0.1397 & \textbf{4.97} & 23.46 & 0.1219 & \textbf{4.22} \\
& QTT-SRTD & 19.84 & 0.1805 & 53.72 & 21.74 & 0.1417 & 50.23 & 23.40 & 0.1152 & 50.23 & 24.61 & 0.1000 & 54.27 \\
\multirow{-7}{*}{Tree} & RBTR-TV & \textbf{21.42} & \textbf{0.1541} & 11.49 & \textbf{23.67} & \textbf{0.1189} & 13.70 & \textbf{25.50} & \textbf{0.0964} & 11.52 & \textbf{26.75} & \textbf{0.0834} & 11.58 \\ \hline

& SiLRTC-TT & 13.72 & 0.3984 & 15.07 & 15.83 & 0.3103 & 15.14 & 17.57 & 0.2534 & 15.93 & 18.99 & 0.2158 & 13.97 \\
& RTRC & 12.41 & 0.4771 & 22.32 & 16.07 & 0.3158 & 38.72 & 18.83 & 0.2227 & 27.50 & 20.49 & 0.1885 & 20.09 \\
& MF-TTTV & 16.98 & 0.2735 & 362.65 & 20.42 & 0.1841 & 362.66 & 21.93 & 0.1491 & 369.05 & 24.15 & 0.1223 & 249.03 \\
& TVTRC & 7.83 & 0.7840 & 38.53 & 9.23 & 0.6672 & 38.11 & 12.82 & 0.4413 & 38.29 & 14.59 & 0.3602 & 16.72 \\
& LRQC & 16.55 & 0.3392 & \textbf{7.29} & 18.98 & 0.2505 & \textbf{5.22} & 20.85 & 0.2046 & \textbf{4.33} & 21.99 & 0.1736 & \textbf{3.86} \\
& QTT-SRTD & 17.01 & 0.2702 & 59.37 & 19.27 & 0.2048 & 59.41 & 21.06 & 0.1568 & 57.78 & 22.57 & 0.1370 & 57.70 \\
\multirow{-7}{*}{Butterfly} & RBTR-TV & \textbf{18.50} & \textbf{0.2288} & 12.35 & \textbf{21.21} & \textbf{0.1669} & 13.12 & \textbf{22.75} & \textbf{0.1397} & 12.91 & \textbf{24.47} & \textbf{0.1145} & 13.16 \\ \hline

& SiLRTC-TT & 19.11 & 0.1766 & 16.51 & 21.52 & 0.1347 & 15.48 & 23.19 & 0.1109 & 12.90 & 24.42 & 0.0962 & 10.48 \\
& RTRC & 15.63 & 0.2703 & 22.28 & 22.20 & 0.1260 & 22.60 & 24.11 & 0.0999 & 22.58 & 26.30 & 0.0773 & 27.24 \\
& MF-TTTV & 24.38 & 0.1021 & 342.00 & 26.32 & 0.0772 & 307.20 & 27.36 & 0.0657 & 171.98 & 29.56 & 0.0551 & 130.78 \\
& TVTRC & 12.87 & 0.3865 & 40.20 & 16.50 & 0.2544 & 43.21 & 19.62 & 0.1778 & 44.91 & 20.08 & 0.1686 & 42.19 \\
& LRQC & 19.62 & 0.1896 & \textbf{5.73} & 24.40 & 0.1104 & \textbf{6.07} & 26.57 & 0.0869 & \textbf{5.54} & 27.55 & 0.0764 & \textbf{4.56} \\
& QTT-SRTD & 22.90 & 0.1120 & 50.45 & 25.24 & 0.0835 & 53.58 & 26.90 & 0.0695 & 49.67 & 28.23 & 0.0593 & 46.26 \\
\multirow{-7}{*}{House2} & RBTR-TV & \textbf{24.64} & \textbf{0.0954} & 12.63 & \textbf{26.84} & \textbf{0.0742} & 12.92 & \textbf{28.54} & \textbf{0.0613} & 12.89 & \textbf{29.90} & \textbf{0.0525} & 12.76 \\ \hline

& SiLRTC-TT & 19.02 & 0.1877 & 14.76 & 22.34 & 0.1279 & 14.91 & 24.06 & 0.1041 & 13.73 & 25.42 & 0.0890 & 10.50 \\
& RTRC & 13.94 & 0.3310 & 24.80 & 23.82 & 0.1075 & 19.58 & 25.40 & 0.0888 & 19.03 & 27.31 & 0.0712 & 21.43 \\
& MF-TTTV & 23.64 & 0.1169 & 344.60 & 26.12 & 0.0788 & 248.00 & 28.94 & 0.0608 & 174.40 & 29.86 & 0.0546 & 135.17 \\
& TVTRC & 12.15 & 0.4011 & 43.30 & 18.17 & 0.2007 & 46.31 & 21.35 & 0.1391 & 21.78 & 22.70 & 0.1191 & 15.36 \\
& LRQC & 18.21 & 0.2445 & \textbf{6.33} & 24.00 & 0.1144 & \textbf{6.82} & 26.84 & 0.0802 & \textbf{5.83} & 28.90 & 0.0634 & \textbf{4.69} \\
& QTT-SRTD & 23.47 & 0.1095 & 48.10 & 25.68 & 0.0828 & 47.04 & 27.04 & 0.0713 & 44.08 & 28.10 & 0.0623 & 45.23 \\
\multirow{-7}{*}{Baby} & RBTR-TV & \textbf{25.01} & \textbf{0.0928} & 13.71 & \textbf{27.35} & \textbf{0.0701} & 12.60 & \textbf{29.29} & \textbf{0.0561} & 12.35 & \textbf{30.75} & \textbf{0.0474} & 13.60 \\ \hline
% Image: Bird
& SiLRTC-TT & 17.34 & 0.3849 & 14.82 & 19.80 & 0.2901 & 15.19 & 21.91 & 0.2271 & 12.58 & 23.34 & 0.1929 & 10.19 \\
& RTRC & 14.48 & 0.5543 & 23.29 & 19.21 & 0.3204 & 20.28 & 22.96 & 0.2111 & 29.95 & 24.02 & 0.1825 & 27.89 \\
& MF-TTTV & 20.35 & 0.2795 & 339.42 & 25.19 & 0.1582 & 260.60 & 28.76 & 0.1029 & 244.13 & 28.81 & 0.0965 & 126.61 \\
& TVTRC & 12.13 & 0.7051 & 39.97 & 15.30 & 0.4896 & 40.87 & 18.49 & 0.3389 & 49.88 & 20.21 & 0.2781 & 23.45 \\
& LRQC & 15.85 & 0.4842 & \textbf{6.95} & 21.45 & 0.2674 & \textbf{7.26} & 24.85 & 0.1976 & \textbf{5.77} & 25.91 & 0.1599 & \textbf{4.72} \\
& QTT-SRTD & 23.34 & 0.1836 & 64.26 & 26.23 & 0.1294 & 62.94 & 28.36 & 0.1044 & 61.37 & 29.85 & 0.0835 & 63.41 \\
\multirow{-7}{*}{Bird} & RBTR-TV & \textbf{24.98} & \textbf{0.1591} & 12.81 & \textbf{27.95} & \textbf{0.1129} & 13.07 & \textbf{30.15} & \textbf{0.0876} & 12.67 & \textbf{31.83} & \textbf{0.0722} & 12.64 \\ \hline
\end{tabular}%
}}
\end{table}
%\FloatBarrier
%\begin{figure}
%    \centering
%    \includegraphics{image.PNG}
%\end{figure}

%\begin{figure*}[!ht]
\begin{figure*}
        \centering
        \includegraphics[width=\textwidth]{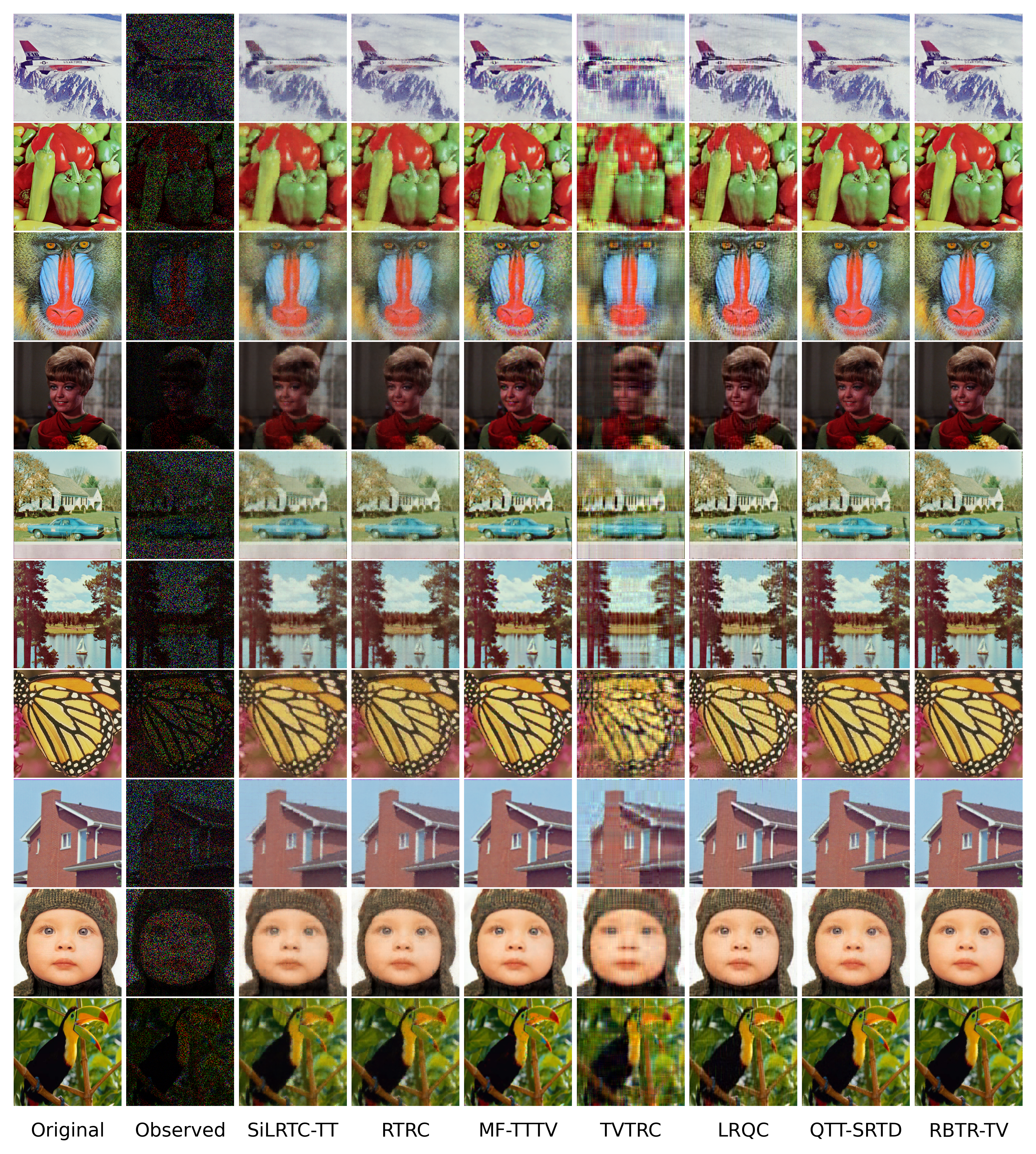}
        \caption{Recovery performance of different methods at SR=20\%}
        \label{image_result_table}
\end{figure*}

\subsection{Color video completion}
In this experiment, we select four videos\footnote{http://trace.eas.asu.edu/yuv/} for testing: \textit{bus}, \textit{foreman}, \textit{mother}, and \textit{hall} with SR=10\% and SR=20\%. Each video was sampled for 15 frames with a size of $256\times 256$. The methods we compared include SiLRTC-TT\cite{bengua2017efficient},  MF-TTTV\cite{ding2019low}, TVTRC\cite{he2019total}, RTRC\cite{huang2020robust},   QTT-SRTD\cite{miao2024quaternion}, and our proposed RBTR-TV.

Table \ref{video_result} presents the average PSNR, RSE, and Time (in seconds) over all frames for each method. Our proposed RBTR-TV method achieves the best PSNR and RSE values among all the compared methods.
Although it is not the fastest in terms of running time, it significantly improves the recovered quality compared to the fastest method. Figure \ref{video_plot} shows the recovery results of frame 1 and frame 15 for the \textit{foreman} and \textit{bus} videos at SR=10\%. It can be seen that our proposed method performs better in recovering missing data, especially when a large amount of information is missing.

\begin{table}[!ht]
\centering
\caption{PSNR, RSE and Time of various methods with two sampling rates (bold indicates best performance)}\label{video_result}
\resizebox{0.8\textwidth}{!}{%
\begin{tabular}{cccccccc}
\hline
Video &
SR &
  \multicolumn{3}{c}{10\%} &
  \multicolumn{3}{c}{20\%} \\ \cline{3-8} 
 &
 Method &
  PSNR &
  RSE &
  Time &
  PSNR &
  RSE &
  Time \\ \hline
 &
  SiLRTC-TT &
  18.95 &
  0.3383 &
  \textbf{4.28} &
  21.62 &
  0.2515 &
  \textbf{4.57} \\
 &
  RTRC &
  20.91 &
  0.26083 &
  69.13 &
  23.62 &
  0.2038 &
  62.56 \\
 &
  MF-TTTV &
  21.33 &
  0.2683 &
  81.08 &
  23.55 &
  0.2049 &
  56.07 \\
 &
  TVTRC &
  23.03 &
  0.2440 &
  97.44 &
  23.74 &
  0.2185 &
  57.54 \\
 &
  QTT-SRTD &
  23.76 &
  0.1876 &
  23.73 &
  27.52 &
  0.1349 &
  23.72 \\
\multirow{-6}{*}{Bus} &
  RBTR-TV &
  \textbf{25.17} &
  \textbf{0.1692} &
  18.17 &
  \textbf{28.31} &
  \textbf{0.1156} &
  18.27 \\ \hline
 &
  SiLRTC-TT &
  20.37 &
  0.14407 &
  \textbf{4.19} &
  25.14 &
  0.0840 &
  \textbf{4.20} \\
 &
  RTRC &
  25.29 &
  0.0848 &
  64.41 &
  29.75 &
  0.0512 &
  70.57 \\
 &
  MF-TTTV &
  25.45 &
  0.0803 &
  67.54 &
  27.82 &
  0.0612 &
  39.53 \\
 &
  TVTRC &
  26.51 &
  0.08151 &
  35.87 &
  28.67 &
  0.0594 &
  26.32 \\
 &
  QTT-SRTD &
  27.85 &
  0.0506 &
  23.95 &
  31.13 &
  0.0343 &
  23.68 \\
\multirow{-6}{*}{Foreman} &
  RBTR-TV &
  \textbf{33.53} &
  \textbf{0.0318} &
  19.86 &
  \textbf{36.70} &
  \textbf{0.0220} &
  19.38 \\ \hline
 &
  SiLRTC-TT &
  25.84 &
  0.0895 &
  \textbf{4.06} &
  29.58 &
  0.0579 &
  \textbf{4.93} \\
 &
  RTRC &
  27.61 &
  0.0738 &
  61.25 &
  32.51 &
  0.0415 &
  60.44 \\
 &
  MF-TTTV &
  28.18 &
  0.0682 &
  66.91 &
  30.68 &
  0.0511 &
  39.59 \\
 &
  TVTRC &
  33.32 &
  0.0384 &
  61.74 &
  32.49 &
  0.0417 &
  13.00 \\
 &
  QTT-SRTD &
  32.03 &
  0.0344 &
  25.32 &
  35.28 &
  0.0230 &
  25.01 \\
\multirow{-6}{*}{Mother} &
  \textbf{RBTR-TV} &
  \textbf{38.38} &
  \textbf{0.0211} &
  18.66 &
  \textbf{42.11} &
  \textbf{0.0136} &
  18.50 \\ \hline
 &
  SiLRTC-TT &
  21.94 &
  0.1438 &
  \textbf{4.06} &
  26.38 &
  0.0873 &
  \textbf{4.27} \\
 &
  RTRC &
  26.86 &
  0.0874 &
  61.93 &
  30.59 &
  0.0539 &
  71.81 \\
 &
  MF-TTTV &
  26.06 &
  0.0883 &
  71.19 &
  29.27 &
  0.0616 &
  41.45 \\
 &
  TVTRC &
  31.26 &
  0.0528 &
  27.01 &
  32.22 &
  0.0474 &
  11.01 \\
 &
  QTT-SRTD &
  32.68 &
  0.0336 &
  23.57 &
  36.19 &
  0.0219 &
  23.46 \\
\multirow{-6}{*}{Hall} &
  RBTR-TV &
  \textbf{37.79} &
  \textbf{0.0238} &
  19.34 &
  \textbf{40.30} &
  \textbf{0.0171} &
  19.02 \\ \hline
\end{tabular}%
}
\end{table}

\begin{figure*}[!ht]
        \centering
        \includegraphics[width=\textwidth]{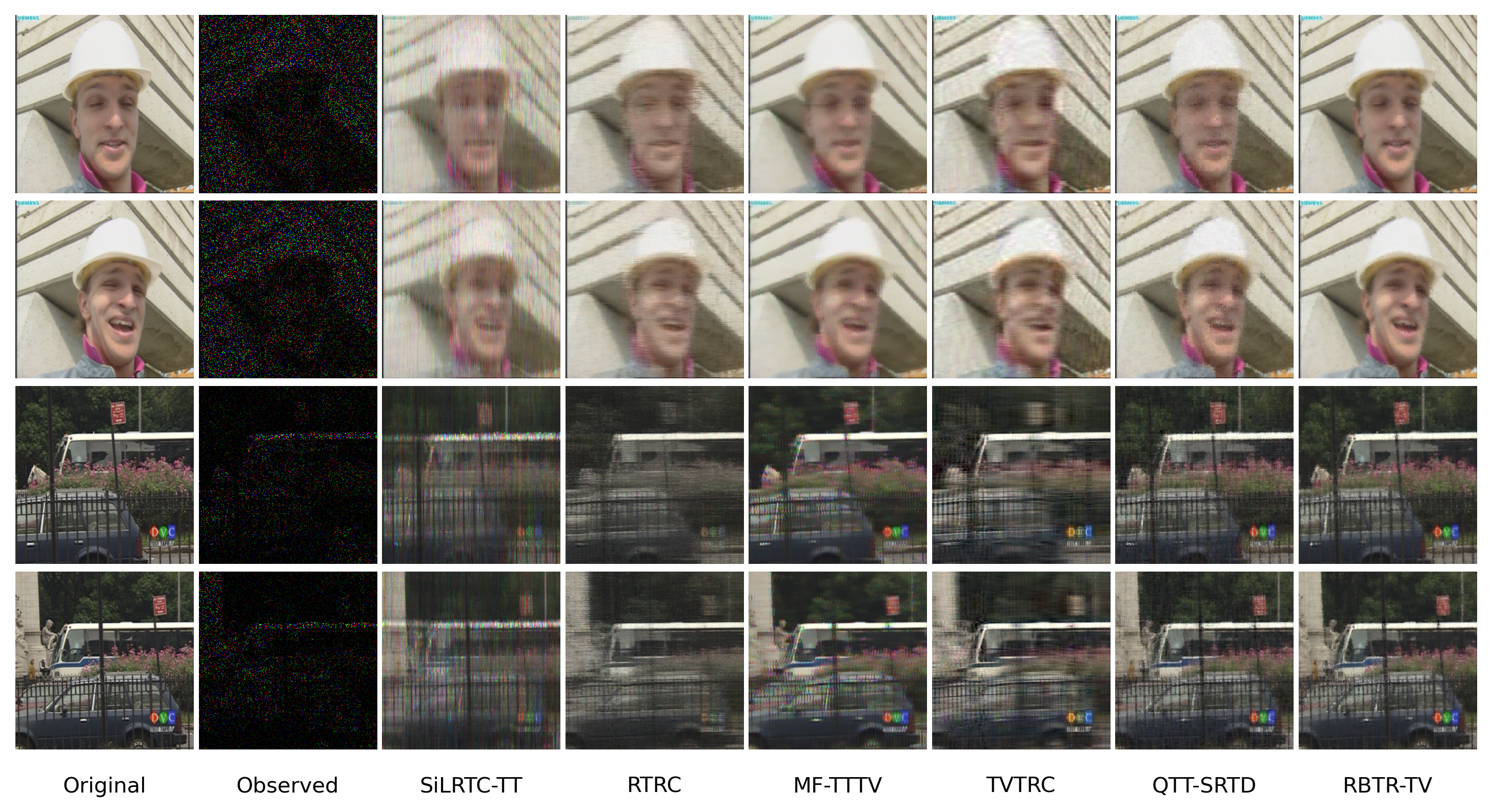}
        \caption{Recovery performance of different methods at SR=10\%}
        \label{video_plot}
\end{figure*}

\section{Conclusion and future works}

In this work, we propose the reduced biquaternion tensor ring (RBTR) decomposition and present its corresponding algorithm, which reduces storage costs compared to TR-SVD while preserving reconstruction quality. Building on the RBTR decomposition, we further introduce RBTR-TV, a novel low-rank tensor completion method that integrates RBTR rank with total variation (TV) regularization. Experimental results demonstrate its effectiveness in completing color images and videos.

Although RBTR-TV has shown strong performance in color image and video completion, there still remain some challenges. For example, the convergence speed of our current algorithm requires improvement.  Our future work will focus on strengthening the theoretical foundation and enhancing the algorithm's efficiency and speed.
\\
{\bf Acknowledgement:} This research is partially supported by the National Natural Science Foundation of China (12371023, 12271338), and Canada NSERC, The joint research and Development fund of Wuyi University, Hong Kong and Macao (2019WGALH20).
\iffalse
{\bf Declaration of Competing Interest:} The authors declare no conflict of interest.
\\

 \fi
 \section{Declarations}
{\bf Conflict of Interest:} The authors have not disclosed any competing interests.
\\
\\
{\bf Data Availability Statement:}
The data supporting the findings of this study are not publicly available due to privacy or ethical restrictions. However, interested researchers may request access to the data by contacting the corresponding author and completing any necessary data sharing agreements.

\end{document}